\documentclass[11pt,reqno]{article}

\usepackage[usenames]{color}
\usepackage{graphicx}
\usepackage{amscd}
\usepackage[english]{babel}
\usepackage{color}
\usepackage{fullpage}
\usepackage{psfig}
\usepackage{graphics,amsmath,amssymb}
\usepackage{amsthm}
\usepackage{amsfonts}
\usepackage{latexsym}
\usepackage{epsf}
\usepackage{float}

\usepackage[colorlinks=true,
linkcolor=webgreen,
filecolor=webbrown,
citecolor=webgreen]{hyperref}

\definecolor{webgreen}{rgb}{0,.5,0}
\definecolor{webbrown}{rgb}{.6,0,0}

\setlength{\textwidth}{6.5in}
\setlength{\oddsidemargin}{.1in}
\setlength{\evensidemargin}{.1in}
\setlength{\topmargin}{-.5in}
\setlength{\textheight}{8.9in}

\newcommand{\seqnum}[1]{\href{http://oeis.org/#1}{\underline{#1}}}
\newcommand{\beql}[1]{\begin{equation}\label{#1}}
\newcommand{\eeq}{\end{equation}}
\newcommand{\eqn}[1]{(\ref{#1})}

\newcommand{\sL}{{\cal{L}}}

\newcommand{\sR}{{\cal{R}}}

\DeclareMathOperator{\Fib}{Fib}
\DeclareMathOperator{\GF}{GF}
\DeclareMathOperator{\height}{ht}
\DeclareMathOperator{\wt}{wt}
\DeclareMathOperator{\I}{I}
\DeclareMathOperator{\II}{II}
\DeclareMathOperator{\III}{III}
\DeclareMathOperator{\IV}{IV}
\newcommand{\ZZ}{\mathbb Z}

\newtheorem{thm}{Theorem}{\bfseries}{\itshape}
{\bfseries}{\itshape}
{\bfseries}{\itshape}
{\bfseries}{\itshape}
{\bfseries}{\itshape}

\hyphenation{sub-sequence}

\begin{document}
\theoremstyle{plain}

\begin{center}
{\large\bf On the Number of ON Cells in Cellular Automata } \\
\vspace*{+.2in}

N. J. A. Sloane \\ 
The OEIS Foundation Inc., \\
11 South Adelaide Ave., 
Highland Park, NJ 08904, USA \\
Email:  \href{mailto:njasloane@gmail.com}{\tt njasloane@gmail.com} 
\vspace*{+.1in}

March 3, 2015

\vspace*{+.1in}
\textsc{To Ron Graham, commemorating his 80th birthday, \\
and 47 years of friendship}

\vspace*{+.1in}

\begin{abstract}

If a cellular automaton (CA) is started with a single ON cell,
how many cells will be ON after $n$ generations?
For certain ``odd-rule'' CAs, including 
Rule 150, Rule 614, and Fredkin's Replicator,
the answer can be found by using the combination of a new
transformation of sequences, the run length transform,
and some delicate scissor cuts.
Several other CAs are also discussed, although  the 
analysis becomes more difficult as the patterns 
become more intricate.
\end{abstract}
\end{center}


\section{Introduction}\label{Sec1}
When confronted with a number sequence, 
the first thing  is to try to
conjecture a rule or formula, and then 
(the hard part) 
prove that the formula is correct.
This article had its origin
in the study of one such sequence,
$1, 8, 8, 24, 8, 64, 24, 112, 8, 64, 64, 192, \ldots$
(\seqnum{A160239}\footnote{Six-digit numbers prefixed by A
refer to entries in \cite{OEIS}.}), although several similar sequences will 
also be discussed.

These sequences arise from studying how activity 
spreads in cellular automata (for background see
 \cite{Tooth, Epp09, Fred2000, Kari, MOW84,
PaWo85,  Sing03, Ulam62,
Wolf83, Wolf84, NKS}).
 If we start with a single ON cell, how many cells will be ON after $n$ generations?
 The sequence above arises from the CA known as Fredkin's Replicator
 \cite{Layman}.
 In 2014, Hrothgar  sent the author a manuscript \cite{Hroth}
 studying this CA, and  conjectured that the sequence satisfied a certain recurrence.
 One of the goals of the present paper is to prove 
 that this conjecture is correct---see \eqn{EqFred21}.

In Section \ref{SecOdd} we discuss a general class
 (the ``odd-rule'' CAs) to which Fredkin's Replicator belongs,
and in \S\ref{SecRLT} we introduce an operation
on number sequences
(the ``run length transform'')
which helps in understanding the resulting sequences.
Fredkin's Replicator, which is based on the Moore neighborhood,
is the subject of \S\ref{SecFR},
and \S\ref{SecVN} analyzes another odd-rule CA,  based on
the von Neumann neighborhood with a center cell.
Although these two CAs are similar, different
techniques are required for establishing the recurrences.
 Both proofs  involve making scissor cuts to
dissect the configuration of ON cells into recognizable pieces.

Section \ref{SecOther} discusses some other CAs in one,
two, and three dimensions
where it is possible to find a formula, and some for which no formula is presently known.
In dimension one, Stephen Wolfram's 
well-known list \cite{PaWo85, Wolf84, NKS} of 256 different CAs based on a three-celled
neighborhood gives rise to just seven interesting sequences (\S\ref{Sec61}).
Other two-dimensional CAs are discussed
in \S\S\ref{Sec62}, \ref{Sec63}, and
the three-dimensional analog of Fredkin's Replicator
in \S\ref{Sec64}.
The final section (\S\ref{SecFurther}) gives some additional properties of the run length transform.
For many further examples of cellular automata sequences,
see \cite{Tooth} and \cite{OEIS} (the 
index to \cite{OEIS} lists nearly 200 such sequences).


\begin{figure}[!h]
\centerline{\includegraphics[angle=0, width=3.50in]{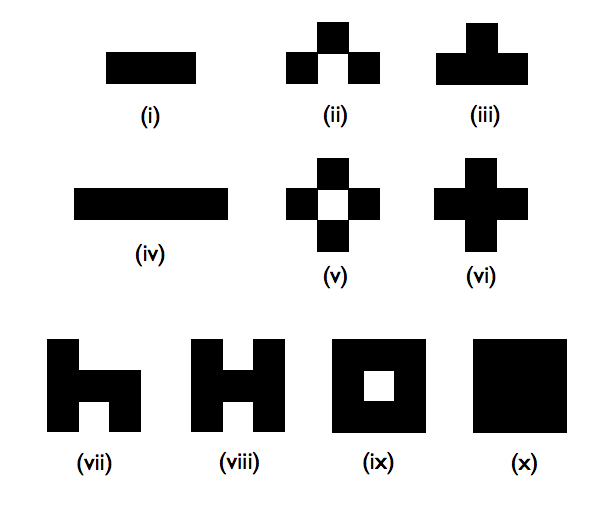}}
\caption{Some two-dimensional neighborhoods (Figs.~(i) and (iv) 
are three and five cells wide, respectively).}
\label{FigXX}
\end{figure}

\section{Odd-rule CAs}\label{SecOdd}

We consider cellular automata  whose cells form a
$d$-dimensional cubic lattice $\ZZ^d$, where $d$ is  
1, 2, or 3.
Each cell is either ON or OFF, and an ON cell
with center at the lattice point
$u = (u_1, u_2, \ldots, u_d) \in \ZZ^d$ 
will be identified with the monomial
$x^u = x_1^{u_1} x_2^{u_2} \cdots x_d^{u_d}$,
which we regard as an element of the ring of Laurent
polynomials $\sR = \GF(2)[x_1, x_1^{-1}, \ldots, x_d, x_d^{-1}]$
with mod 2 coefficients.  
The state of the CA is  specified by giving
the formal sum $S$ of all its ON cells.  As long
as only finitely many cells are ON, $S$ is indeed 
a polynomial in the variables $x_i$ and $x_i^{-1}$,
and is therefore an element of $\sR$.
We write $u \in S$ to indicate that $u$ is ON, i.e., that $x^u$ is
a monomial in $S$.

In most of this paper we will focus on what may be called
``odd-rule'' CAs. An odd-rule CA is defined by specifying 
a neighborhood of the cell at the origin, given by
an element $F \in \sR$ listing the cells
in the neighborhood.
A typical example is the Moore neighborhood in $\ZZ^2$,
which consists of the eight cells surrounding the cell
at the origin in the square grid (see Fig.~\ref{FigXX}(ix)), 
and is specified by
\begin{align}\label{EqFred0}
F  &~:=~ \frac{1}{xy} + \frac{1}{y} + \frac{x}{y}
+ \frac{1}{x}+x
+\frac{y}{x}+y+xy \nonumber \\
& ~=~\left(\frac{1}{x}+1+x\right)\left(\frac{1}{y}+1+y\right) -1 ~  \in  ~ \sR = \GF(2)[x,x^{-1},y,y^{-1}].
\end{align}
The neighborhood of an arbitrary cell $u$ is obtained
by shifting $F$ so it is centered at $u$, that is, by the product
$x^u F \in \sR$. Given $F$, the corresponding
{\em odd-rule} CA is defined by the rule that the cell at $u$ is ON
at generation $n+1$ if it is the neighbor of an odd number of cells that were
ON at generation $n$, and is otherwise OFF.

Our goal is to find $a_n(F)$, the number of ON cells at the $n$th generation
when the CA is started in generation 0 with a single
ON cell at the origin.
For odd-rule CAs there is a simple formula.
The number of nonzero terms in an element $P \in \sR$ will
be denoted by $|P|$.

\begin{thm}\label{Th1}
For an odd-rule CA with neighborhood $F$, 
the state at generation $n$ is equal to $F^n$, and 
$a_n(F) = |F^n|$.
\end{thm}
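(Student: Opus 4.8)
The plan is to show that the one-step update map of the CA is exactly multiplication by $F$ in the ring $\sR$, and then to iterate. Since the CA starts from the single monomial $S_0 = 1$, this single-step identity will give $S_n = F^n$ by an immediate induction, and the count $a_n(F) = |F^n|$ will follow at once from the definition of $|\cdot|$ as the number of nonzero terms, because the ON cells are precisely the monomials appearing in the state.

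First I would fix notation: let $S_n \in \sR$ be the formal sum of ON cells at generation $n$, so $S_0 = 1$ and each $S_n$ has finite support (this is what keeps us inside $\sR$, since $F$ has finite support and a product of finitely-supported elements is finitely supported). The whole theorem then reduces to the single-step claim
\[
S_{n+1} ~=~ F \cdot S_n \quad \text{in } \sR .
\]
To verify this I would compute the coefficient of an arbitrary monomial $x^u$ in the product. Writing $F = \sum_w x^w$ and $S_n = \sum_v x^v$, with the sums running over their supports and coefficients in $\GF(2)$, we have
\[
F \cdot S_n ~=~ \sum_{w,\,v} x^{w+v},
\]
so the coefficient of $x^u$ is the number, reduced modulo $2$, of pairs $(w,v)$ with $w \in \operatorname{supp}(F)$, $v \in \operatorname{supp}(S_n)$, and $w + v = u$; equivalently, it is the parity of $\#\{v \in \operatorname{supp}(S_n) : u - v \in \operatorname{supp}(F)\}$.

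The key step is to recognize this parity as exactly the odd-rule condition. By definition the neighborhood of a cell $v$ is $x^v F$, that is, the set $\{v + w : w \in \operatorname{supp}(F)\}$, so $u$ is a neighbor of $v$ if and only if $u - v \in \operatorname{supp}(F)$. Hence the count above is precisely the number of cells $v$ that are ON at generation $n$ and have $u$ as a neighbor, and its parity being odd is exactly the rule declaring $u$ to be ON at generation $n+1$. Because we work over $\GF(2)$, the phrases ``odd number of contributing neighbors'' and ``coefficient equal to $1$'' are synonymous, so the coefficient of $x^u$ in $F \cdot S_n$ equals $1$ if and only if $u \in S_{n+1}$, proving the single-step identity.

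With that in hand, induction from $S_0 = 1$ gives $S_n = F^n$ for all $n \ge 0$, and since $a_n(F)$ counts the ON cells — the nonzero monomials of $S_n = F^n$ — we conclude $a_n(F) = |F^n|$. I expect the only delicate point to be the matching carried out in the key step: making sure the convolution's coefficient of $x^u$ is identified with the correct ``$u$ is a neighbor of $v$'' relation rather than its transpose, and that the mod-$2$ arithmetic of $\sR$ faithfully records the parity in the odd-rule. Once this bookkeeping is pinned down the remainder is routine.
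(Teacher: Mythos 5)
Your proposal is correct and follows essentially the same route as the paper: both reduce the theorem to the single-step identity that the update map is multiplication by $F$ in $\sR$, verified by matching the mod-2 coefficient of $x^u$ in the product with the parity count in the odd-rule condition, and then conclude by induction from $S_0 = 1$. The extra care you take in identifying $u \in x^v F$ with $u - v \in \operatorname{supp}(F)$ is exactly the bookkeeping the paper does implicitly.
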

\begin{proof}
We use induction on $n$.
By definition, the initial state is $1 = F^0$, and $a_0(F) =1$.
The ON cell at the origin turns ON all the cells in $F$,
so the state at generation 1 is $F$ itself, and $a_1(F) = |F|$.
Suppose the state at generation $n$ is $F^n$.
An ON cell $w \in F^n$ will affect a cell $u$ if and only if 
$u$ is in the neighborhood of $w$, that is, if and only if
$u \in wF$.
For $u$ to be turned ON,
there must be an odd number
of cells $w \in F^n$ with $u \in wF$.
Since the coefficients in $\sR$ are evaluated mod 2, 
$u$ will be turned ON if and only if
$u \in \sum_{w \in F^n} wF = F \sum_{w \in F^n}w = F.F^n = F^{n+1}$.
So $F^{n+1}$ is precisely the state at generation $n+1$,
and $a_{n+1}(F) = |F^{n+1}|$.
\end{proof}


\section{The run length transform}\label{SecRLT}
We define an operation on number sequences, 
the ``run length transform''.  For an integer $n \ge 0$,
let $\sL(n)$ denote the list of the lengths of
the maximal runs of 1s in the binary expansion of $n$.
For example, since the binary expansion of 55 is 110111,
containing runs of 1s of lengths 2 and 3,
$\sL(55) = [2,3]$.
$\sL(0)$ is the empty list, and
$\sL(n)$ for $n=1,\ldots,12$ is respectively
$[1], [1], [2], [1], [1,1], [2], [3], [1], [1,1], [1,1], [1,2], [2]$
(\seqnum{A245562}).

\vspace*{+.1in}
\noindent
\textbf{Definition.} The {\em run length transform} of a sequence 
$[S_n, n \ge 0]$ is the sequence $[T_n, n \ge 0]$ 
given by
\beql{EqRLT1}
T_n ~=~ \prod_{i \in \sL(n)} S_i.
\eeq
Note that $T_n$ depends only on the lengths of the runs of 1s 
in the binary expansion of $n$, not on the order
in which they appear. For example, since $\sL(11) = [1,2]$
and $\sL(13)=[2,1]$, $T_{11}=T_{13}= S_1S_2$.
Also $T_0=1$ (the empty product), so the value 
of $S_0$ is never used, and  will usually be taken to be 1.
Further properties and additional examples of the run
length transform will be given in \S\ref{SecFurther}. 
See especially Table \ref{Tab0}, which shows 
how the transformed sequence has a natural division into blocks.
 
Define the {\em height} $\height(P)$ of an element
$P \in \sR$ to be the maximal value of $|e_i|$ in any
monomial $x_1^{e_1} \cdots x_d^{e_d}$ in $P$.
If $\height(P)=h$, the cells in $P$ are contained in a $d$-dimensional 
cube centered at the origin with edges that are $2h+1$ cells long.
Note that $\height(PQ) \le \height(P)+\height(Q)$
and $\height (P^k) = k \height(P)$.

The second property that makes odd-rule CAs
easier to analyze than most  is the following.
\begin{thm}\label{Th2}
If $\height(F) \le 1$, then $[a_n(F), n \ge 0]$ is
the run length transform of the subsequence
\beql{EqRLT2}
[a_0(F), a_1(F), a_3(F), a_7(F),  a_{15}(F),\ldots, a_{2^k-1}(F), \ldots] .
\eeq
\end{thm}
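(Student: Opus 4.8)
The plan is to combine Theorem~\ref{Th1} with the characteristic-$2$ Frobenius identity and a base-$2$ ``separation of scales'' argument. By Theorem~\ref{Th1}, $a_n(F) = |F^n|$, and writing $S_\ell := a_{2^\ell-1}(F) = |F^{2^\ell-1}|$ the claim is precisely that $|F^n| = \prod_{\ell \in \sL(n)} S_\ell$; this normalization is consistent, since $2^\ell-1$ is a string of $\ell$ ones, so $\sL(2^\ell-1) = [\ell]$. First I would record that, because $\sR$ has mod~$2$ coefficients, $(P+Q)^2 = P^2 + Q^2$, and hence $F^{2^k}$ is obtained from $F$ by the substitution $x_i \mapsto x_i^{2^k}$. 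In particular, raising to a power of $2$ merely rescales exponents and leaves the number of monomials unchanged, so $|P^{2^k}| = |P|$. Expanding $n = \sum_k b_k 2^k$ in binary then gives $F^n = \prod_{k:\, b_k = 1} F^{2^k}$.

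Next I would group the factors according to the maximal runs of $1$s in the binary expansion of $n$. A run of length $\ell$ occupying bit positions $k, k+1, \ldots, k+\ell-1$ contributes $\prod_{j=0}^{\ell-1} F^{2^{k+j}} = F^{2^k(2^\ell-1)} = \bigl(F^{2^\ell-1}\bigr)^{2^k}$, so that
\beql{EqRLTpf}
F^n ~=~ \prod_{(k,\ell)} \bigl(F^{2^\ell-1}\bigr)^{2^k},
\eeq
the product running over the runs $(k,\ell)$ of $n$. By the previous paragraph, each factor satisfies $\bigl|\bigl(F^{2^\ell-1}\bigr)^{2^k}\bigr| = |F^{2^\ell-1}| = S_\ell$, so the theorem reduces to showing that the number of monomials in the product~\eqn{EqRLTpf} equals the product of the numbers of monomials in its factors.

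The heart of the argument is a separation-of-scales lemma, which is where the hypothesis $\height(F) \le 1$ enters: if $P, Q \in \sR$ satisfy $\height(P) < 2^{K}$ while every exponent occurring in $Q$ is divisible by $2^{K}$, then $|PQ| = |P|\,|Q|$. Indeed, if $x^a x^b = x^{a'} x^{b'}$ with $x^a, x^{a'}$ monomials of $P$ and $x^b, x^{b'}$ monomials of $Q$, then coordinatewise $a_i - a'_i = b'_i - b_i$, where the left side has absolute value $< 2^{K}$ and the right side is a multiple of $2^{K}$, forcing $a = a'$ and $b = b'$. Hence distinct pairs of monomials give distinct products, none collide, no mod~$2$ cancellation occurs, and the counts multiply. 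To apply this I would take $P = \bigl(F^{2^{\ell_1}-1}\bigr)^{2^{k_1}}$ to be the factor of the lowest run and $Q$ the product of the remaining run factors. Since $\height(F) \le 1$ gives $\height(F^{2^{\ell_1}-1}) = 2^{\ell_1}-1$, we obtain $\height(P) \le 2^{k_1}(2^{\ell_1}-1) < 2^{k_1+\ell_1}$; and because a maximal run is followed by at least one $0$ bit, the next run begins at $k_2 \ge k_1 + \ell_1 + 1$, so every exponent of $Q$ is divisible by $2^{k_2} \ge 2^{k_1+\ell_1+1}$. Taking $K = k_1 + \ell_1$, the lemma applies, and inducting on the number of runs peels off one run at a time to yield $|F^n| = \prod_{(k,\ell)} S_\ell = \prod_{\ell \in \sL(n)} S_\ell = T_n$.

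The main obstacle, and the only place the height restriction is genuinely needed, is the separation-of-scales step. Without $\height(F) \le 1$, the factor belonging to one run could spill into the binary range occupied by the next run, the ``multiple of $2^K$ versus smaller than $2^K$'' dichotomy would fail, and monomials from different runs could collide or cancel. The single $0$ bit separating consecutive runs supplies exactly the slack $2^{k_1+\ell_1} \le 2^{k_2-1}$ that keeps the two scales disjoint, so carefully tracking these bounds, including for negative exponents, is the crux of the proof.
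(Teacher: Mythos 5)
Your overall strategy is the same as the paper's: apply Theorem \ref{Th1}, use the Freshman's Dream to write $F^n$ as a product of run factors $\bigl(F^{2^\ell-1}\bigr)^{2^k}$, and argue that the widely separated scales prevent any collision of monomials, so that the monomial counts multiply. The one genuine problem is that your separation-of-scales lemma is false as stated. If $\height(P)<2^{K}$ and every exponent of $Q$ is divisible by $2^{K}$, it does \emph{not} follow that $|PQ|=|P|\,|Q|$: take $K=1$, $P=x^{-1}+x$ and $Q=1+x^2$, so that $PQ=x^{-1}+x+x+x^3=x^{-1}+x^3$ in $\sR$ and $|PQ|=2\ne 4$. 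The error is in the sentence claiming that $a_i-a'_i$ has absolute value $<2^{K}$: each of $a_i$, $a'_i$ separately has absolute value $<2^{K}$, so their difference is bounded only by $2^{K+1}-2$, and it can equal the nonzero multiple $2^{K}$ of $2^{K}$ (as it does in the example above). The correct hypothesis is that the exponents of $Q$ be divisible by $2^{K+1}$ (equivalently, that twice the height of $P$ be less than the common modulus), which is exactly the factor of two the paper builds in when it verifies $2(2^{m_3}-1)+1 < 2^{m_3+1}\le 2^{m_2+m_3}$.

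Fortunately your application supplies the stronger hypothesis for free: as you yourself observe, a maximal run is followed by at least one $0$ bit, so the next run starts at $k_2\ge k_1+\ell_1+1$ and the exponents of $Q$ are divisible by $2^{K+1}$ with $K=k_1+\ell_1$, while $\height(P)<2^{K}$. With the lemma restated in that form its proof goes through ($|a_i-a'_i|<2^{K+1}$ while $b'_i-b_i$ is a multiple of $2^{K+1}$, forcing both to vanish), and the rest of your induction on the number of runs is correct; it matches the paper's argument, which treats the two-run case explicitly and leaves the general case as a routine extension.
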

\begin{proof}
The proof depends on the identity sometimes
called the Freshman's Dream, which in its simplest
form states that $(x+y)^2 \equiv x^2+y^2$ mod 2,
and more generally that 
for $P(x_1,x_1^{-1},\ldots) \in \sR$,
\beql{EqFD}
P(x_1,x_1^{-1},\ldots)^{2^k} ~=~ P(x_1^{2^k}, x_1^{-2^k}, \ldots),
\eeq
for any integer $k \ge 0$,
and in particular that
$|P(x_1,x_1^{-1},\ldots)^{2^k} | = |P|$.
Suppose first that the binary expansion of $n$ contains
exactly two runs of 1s, separated by one or more 0s, say
$$
n ~=~ \overbrace{111\cdots1}^{m_1}
\overbrace{00\cdots 0}^{m_2}
\overbrace{111\cdots1}^{m_3},  \quad m_1, m_2, m_3 \ge 1,
$$
i.e.,
$$
n ~=~(2^{m_1}-1)2^{m_2+m_3} + (2^{m_3}-1),
$$
with $\sL(n) = [m_1,m_3]$.
Then
\beql{Eqt1}
F^n ~=~ (F^{2^{m_1}-1})^{2^{m_2+m_3}} F^{2^{m_3}-1}
~=~ P^{2^{m_2+m_3}} Q \mbox{~(say)},
\eeq
where $P:=F^{2^{m_1}-1}$, $Q:=F^{2^{m_3}-1}$.
Equation \eqn{Eqt1} states that $F^n$ 
is a sum of copies of  $Q$
centered at the cells of of $P^{2^{m_2+m_3}}$.
By the Freshman's Dream,
$P^{2^{m_2+m_3}}$ is a polynomial in the variables 
$x_i^{\pm 2^{m_2+m_3}}$,  
so the cells in 
$P^{2^{m_2+m_3}}$ are separated by at least $2^{m_2+m_3}$.
Also, $|P^{2^{m_2+m_3}}|=|P|$.
On the other hand, since $\height(F) \le 1$,
$\height(Q) \le 2^{m_3}-1$,
and since 
$$
2(2^{m_3}-1)+1 ~<~ 2^{m_3+1} ~\le~ 2^{m_2+m_3}
$$ 
the copies of $Q$ in $F^n$ are disjoint from each other, and so
$|F^n| = |P||Q|$, or in other words
$$
a_n(F) ~=~ a_{2^{m_1}-1}(F) ~a_{2^{m_3}-1}(F)
~=~ \prod_{i \in \sL(n)} a_{2^i-1}(F) .
$$
It is straightforward to generalize 
this argument to the case when there are more
than two runs of 1s in 
the binary expansion of $n$, and to establish that for any $n$,
\beql{EqRLT3}
a_n(F) ~=~ \prod_{ i \in \sL(n)} a_{2^i-1}(F),
\eeq
thus completing the proof.
\end{proof}

In several interesting cases the subsequence \eqn{EqRLT2} satisfies 
a three-term linear recurrence,
in which case there is also a simple recurrence for the run length
transform.

\begin{thm}\label{ThRec1}
Suppose the sequence $[S_n, n \ge 0]$ is defined by the recurrence
$S_{n+1} = c_2 S_n + c_3 S_{n-1}, n \ge 1$, with
$S_0=1$, $S_1=c_1$. Then its run length transform $[T_n, n \ge 0]$ 
satisfies the recurrence
\beql{EqRec2}
T_{2t}=T_t,  \quad T_{4t+1} = c_1 T_t, \quad
T_{4t+3} = c_2 T_{2t+1} + c_3 T_{t},
\eeq
for $t>0$, with $T_0=1$.
\end{thm}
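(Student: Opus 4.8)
The plan is to establish the three recurrences in \eqn{EqRec2} by computing $T_n$ for indices of the form $2t$, $4t+1$, and $4t+3$ directly from the run length transform formula \eqn{EqRLT1}, using the fact that $T_n = \prod_{i \in \sL(n)} S_i$ depends only on the multiset of run lengths $\sL(n)$. The key observation is that writing an index in one of these three forms amounts to specifying what happens at the low-order end of its binary expansion, and each case causes a controlled, predictable change to $\sL(n)$ relative to a smaller index. So the whole argument reduces to three little lemmas about how appending or modifying low-order bits transforms the list of run lengths.

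First I would handle $T_{2t}=T_t$. Appending a single $0$ to the binary expansion of $t$ (that is, passing from $t$ to $2t$) does not create, destroy, or lengthen any run of $1$s: the trailing $0$ merely terminates whatever was there, so $\sL(2t) = \sL(t)$ as multisets, and hence $T_{2t} = T_t$ immediately from \eqn{EqRLT1}. Next, for $T_{4t+1} = c_1 T_t$, I note that $4t+1$ in binary is the binary expansion of $t$ followed by the two bits $01$. This appends an isolated run of $1$s of length exactly $1$ (the trailing $1$, isolated by the $0$ before it), while leaving the runs coming from $t$ untouched. Thus $\sL(4t+1) = \sL(t) \cup [1]$, giving $T_{4t+1} = S_1 \cdot T_t = c_1 T_t$ since $S_1 = c_1$.

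The case $T_{4t+3} = c_2 T_{2t+1} + c_3 T_t$ is where the linear recurrence on $S$ actually enters, and it is the main obstacle. Here $4t+3$ ends in the two bits $11$, so its lowest run of $1$s has length (at least) $2$ and is longer by exactly one than the corresponding lowest run of $2t+1$, which ends in a single $1$. If the trailing run of $2t+1$ has length $j$, then $\sL(4t+3)$ agrees with $\sL(2t+1)$ except that this one entry $j$ is replaced by $j+1$; likewise $\sL(t)$ differs from $\sL(2t+1)$ by deleting that trailing run entirely (since $t = \lfloor (2t+1)/2 \rfloor$ strips the final $1$). Factoring out the common product $R := \prod$ over the unaffected runs, I would write $T_{4t+3} = R\,S_{j+1}$, $T_{2t+1} = R\,S_j$, and $T_t = R\,S_{j-1}$, and then the claimed identity becomes exactly $S_{j+1} = c_2 S_j + c_3 S_{j-1}$, which is the defining recurrence for $[S_n]$.

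The one subtlety I must treat carefully is the boundary case $t=0$ together with short trailing runs, so that the indices $j-1$, $j$, $j+1$ stay within the range where the recurrence and initial data apply; in particular the smallest case $j=1$ uses $S_2 = c_2 S_1 + c_3 S_0 = c_2 c_1 + c_3$ with the convention $S_0 = 1$, and one checks this matches $T_{4t+3}$ when the trailing run of $2t+1$ has length one. Apart from verifying these base cases and confirming that the ``unaffected'' runs really are identical across the three indices, the argument is purely bookkeeping on binary expansions. I would present the three cases in the order $2t$, $4t+1$, $4t+3$, since each is a clean consequence of \eqn{EqRLT1} once the effect on $\sL$ is identified, with the third case carrying all the arithmetic content.
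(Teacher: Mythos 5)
Your proposal is correct and follows essentially the same route as the paper: $\sL(2t)=\sL(t)$ gives the first identity, the trailing $01$ of $4t+1$ gives the second, and for $4t+3$ you factor out the product $R$ over the unaffected runs and reduce the claim to $S_{j+1}=c_2S_j+c_3S_{j-1}$, which is exactly what the paper does by writing $t=i\cdot 2^{k+1}+(2^k-1)$ (or $t=2^k-1$ when $t$ has no zero bit). One minor wording slip: passing from $2t+1$ to $t$ shortens the trailing run by one rather than ``deleting it entirely'' (it disappears only when $j=1$), but the formula you actually use, $T_t=R\,S_{j-1}$, is the correct one, with the convention $S_0=1$ covering the $j=1$ case.
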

\begin{proof}
$T_{2t}=T_t$ is immediate from the definition
of the run length transform, since $\sL(2n) = \sL(n)$.
The binary expansion of $4t+1$ ends in $01$,
so $T_{4t+1}=T_t S_1 = c_1 T_t$.
If $t=2^k-1$ for some $k \ge 1$ then 
$T_{4t+3}=S_{k+2}=c_2 S_{k+1}+c_3 S_k$,
$T_{4t+1}=c_1 S_k$, $T_{2t+1}=S_{k+1}$,
implying
\beql{EqRec3}
T_{4t+3} = c_2 T_{2t+1} + c_3 T_{t}.
\eeq
On the other hand, if $t$ has a zero in its binary expansion,
say $t=i.2^{k+1}+(2^k-1), k \ge 0$,
then 
$T_{4t+3}=T_i S_{k+2}=T_i(c_2 S_{k+1}+c_3 S_k)$,
$T_{4t+1}=T_i c_1 S_k$, $T_{2t+1}=T_i S_{k+1}$,
and again \eqn{EqRec3} follows.
\end{proof}
  

\section{Fredkin's Replicator}\label{SecFR}
The cellular automaton known as {\em Fredkin's Replicator}
\cite{Fred1990, Fred2000, MK06}
is the two-dimensional odd-rule CA defined by the Moore neighborhood $F$
shown in Fig.~\ref{FigXX}(ix) and 
Eq. \eqn{EqFred0}.
(This is the eight-neighbor totalistic Rule 52428 in the Wolfram numbering scheme
\cite{PaWo85, Wolf84, NKS}.)

\begin{figure}[!h]
\centerline{\includegraphics[angle=0, width=4.00in]{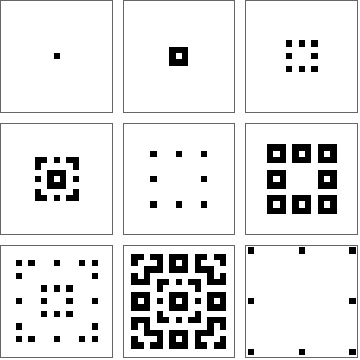}}
\caption{Generations 0 through 8 of the evolution
of Fredkin's Replicator.
ON cells are black, OFF cells are white.}
\label{FigFred1}
\end{figure}

We study the evolution of this CA when it is started at generation 0 
with a single ON cell at
the origin. Generations 0 through 8 are shown in Fig.~\ref{FigFred1}.
 The name of this CA
comes from the fact that any configuration of ON cells will be
replicated eight times at some later stage. For example, generation 1 is
replicated eight times at generation 5.
Although distinctive, the name is not especially appropriate,
since by \eqn{EqFD} any odd-rule CA has a similar replication property.
Let $a_n(F) = a_n$ denote the number of ON cells at the $n$th generation. The initial values of 
$a_n$ are shown in Table \ref{TabFred1}.

\begin{table}[!h]
\caption{ Number of ON cells at $n$th generation of Fredkin's Replicator (\seqnum{A160239}).}
\label{TabFred1}
$$
\begin{array}{c|rrrrrrrrrrrrrrrr}
n& &&&&&& a_n \\
\hline
0& \mathbf{1}  \\
1& \mathbf{8}  \\
2-3& 8 & \mathbf{24}  \\ 
4-7& 8 & 64 & 24 & \mathbf{112}  \\
8-15& 8 & 64 & 64 & 192 & 24 & 192 & 112 & \mathbf{416}  \\ 
16-31& 8 & 64 & 64 & 192 & 64 & 512 & 192 & 896 & 24 & 192 & 192 & 576 & 112 & 896 & 416 & \mathbf{1728}  \\ 
32-63& 8 & 64 & 64 & 192 & 64 & 512 & 192 & 896 & 64 & 512 & 512 & 1536 &
\ldots
\end{array}
$$
\end{table}

Since $\height (F)=1$, we know from Theorem \ref{Th2} that $[a_n, n \ge 0]$ is the
run length transform of the subsequence 
$[b_n = a_{2^n-1}, n \ge 0] = 1, 8, 24, 112, 416, 1728, \ldots $
(shown in bold in Table \ref{TabFred1};
it will turn out to be \seqnum{A246030}).
The main result of this section is the identification of this 
subsequence.

\begin{thm}\label{ThFred1}
The sequence $[b_n, n \ge 0]$ 
 satisfies the recurrence
\beql{EqFred2}
b_{n+1}~=~ 2b_n+8b_{n-1}, ~ {\mbox with~} b_0=1, b_1=8.
\eeq
\end{thm}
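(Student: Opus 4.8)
The plan is to convert the identity $F^{2^{n+1}-1}=F\cdot\bigl(F^{2^n-1}\bigr)^2$ into the recurrence by exploiting the product structure of $F$ over $\GF(2)$. Write $P_n:=F^{2^n-1}\in\sR$, so that $b_n=|P_n|$, and set $A:=x+x^{-1}$, $B:=y+y^{-1}$. Since $x^{-1}+1+x=1+A$ and $y^{-1}+1+y=1+B$ and $1+1=0$, equation \eqn{EqFred0} collapses to $F=(1+A)(1+B)+1=A+B+AB$. By the Freshman's Dream \eqn{EqFD}, $\bigl(F^{2^n-1}\bigr)^2=P_n(x^2,y^2)$, all of whose monomials have even exponents, so
\[
P_{n+1}=(A+B+AB)\,P_n(x^2,y^2).
\]
The three summands $A\,P_n(x^2,y^2)$, $B\,P_n(x^2,y^2)$, $AB\,P_n(x^2,y^2)$ are supported on the exponent-parity classes (odd,\,even), (even,\,odd), (odd,\,odd), which are pairwise disjoint, so no cancellation occurs between them. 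Hence $b_{n+1}=|A\,P_n(x^2,y^2)|+|B\,P_n(x^2,y^2)|+|AB\,P_n(x^2,y^2)|$, and by the $x\leftrightarrow y$ symmetry of $P_n$ the first two counts agree, giving $b_{n+1}=2\,|A\,P_n(x^2,y^2)|+|AB\,P_n(x^2,y^2)|$.

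Next I would read off the two remaining counts geometrically. Multiplying the doubled pattern $P_n(x^2,y^2)$ by $A=x+x^{-1}$ places at the cell $(2a+1,2b)$ the mod-$2$ sum of the horizontally adjacent cells $(a,b)$ and $(a+1,b)$ of $P_n$, so the surviving entries mark exactly the horizontal ON/OFF boundaries of $P_n$; thus $|A\,P_n(x^2,y^2)|=2H(P_n)$, where $H(P_n)$ counts the maximal horizontal runs of ON cells in $P_n$. Likewise $AB$ places at $(2a+1,2b+1)$ the sum of the four cells of the unit square with corner $(a,b)$, so $|AB\,P_n(x^2,y^2)|=C(P_n)$, the number of unit squares containing an odd number of ON cells. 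The theorem is therefore equivalent to the combinatorial identity $4H(P_n)+C(P_n)=2b_n+8b_{n-1}$.

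To evaluate $H$ and $C$ I would dissect $P_n$ through its own recursion. Applying the Freshman's Dream once more yields the clean factorization $P_{n+1}=P_2\cdot P_{n-1}(x^4,y^4)$, where $P_2=F^3=F\cdot F(x^2,y^2)$ is the fixed $24$-cell generation-$3$ pattern, of height $3$ (see Table \ref{TabFred1}). Because $P_2$ lives in the box $[-3,3]^2$ while the cells of $P_{n-1}(x^4,y^4)$ are spaced $4$ apart, two translated copies of $P_2$ can meet only when their source cells are Moore-neighbors in $P_{n-1}$; away from such collisions the copies are disjoint and contribute $24\,b_{n-1}$ in total. The scissor cuts are then the bookkeeping of how many of the $24$ cells are annihilated (mod $2$) along each horizontal, vertical, and diagonal adjacency of $P_{n-1}$ (one finds, e.g., $4$, $4$, and $2$ cancelled cells respectively), combined with a self-similar count of those adjacencies in terms of $b_{n-1}$ and $b_n$; carrying this out should collapse $24\,b_{n-1}$ minus the overlap loss to $2b_n+8b_{n-1}$.

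The main obstacle is precisely this overlap accounting. Neither $H(P_n)$ nor $C(P_n)$ is individually a clean multiple of the $b_k$ (for instance $H(P_1)=4$, $H(P_2)=16$, $C(P_1)=8$, $C(P_2)=48$, against $b_1=8$, $b_2=24$), so the two counts must be tracked together; moreover the pairwise overlaps of $P_2$-copies must be shown to capture the \emph{entire} cancellation, with no surviving contribution from triple or higher coincidences among clustered cells of $P_{n-1}$. Controlling these higher coincidences, and verifying that the adjacency structure of $P_{n-1}$ feeds back exactly the combination $2b_n+8b_{n-1}$, is the delicate step, and is where a separation estimate of the type $2(2^{m}-1)+1<2^{m+1}$ used in the proof of Theorem \ref{Th2} will again be needed.
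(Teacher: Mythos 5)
Your opening reduction is correct and genuinely different from the paper's argument: writing $F=A+B+AB$ with $A=x+x^{-1}$, $B=y+y^{-1}$, using $P_{n+1}=F\cdot P_n(x^2,y^2)$, and observing that the three summands live in disjoint exponent-parity classes is a clean way to get $b_{n+1}=4H(P_n)+C(P_n)$, where $H$ counts maximal horizontal runs and $C$ counts $2\times2$ blocks with an odd number of ON cells. (Your numerics check out: $4H(P_1)+C(P_1)=16+8=24=b_2$ and $4H(P_2)+C(P_2)=64+48=112=b_3$.) But this is a reformulation, not a proof: the entire content of the theorem is now the identity $4H(P_n)+C(P_n)=2b_n+8b_{n-1}$, and you do not establish it. The quantities $H(P_n)$ and $C(P_n)$ depend on the local adjacency structure of $P_n$, which is not determined by the cell counts $b_k$; you would need a separate induction tracking run/block statistics of $P_n$ (or of the haystack-like pieces), and you neither formulate the induction hypothesis nor verify the base and inductive steps. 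Your own closing paragraph concedes exactly this: the overlap accounting in $P_{n+1}=P_2\cdot P_{n-1}(x^4,y^4)$, the exclusion of surviving triple coincidences, and the claim that the adjacency counts of $P_{n-1}$ ``feed back exactly'' the combination $2b_n+8b_{n-1}$ are all asserted as things that ``should collapse'' rather than proved. As it stands the argument has a genuine gap at its only hard step.

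For comparison, the paper closes this gap by a different, fully explicit dissection: it splits $H_n$ into a center cell, four axial strips $X_n,\widetilde{X}_n,Y_n,\widetilde{Y}_n$ (which it computes in closed form as $(x^{-1}+x)^{2^{n-1}-1}$), and four quadrants $\I_n,\dots,\IV_n$; writes $H_{n+1}$ as the sum of eight translates of $H_n$ in a $7\times7$ block matrix; proves the key cancellation identity $\I_n+\II_n+\III_n+\IV_n=0$; and then reads off one intact copy of $H_n$, one ``deconstructed'' copy assembled from the corners and axial strips, and eight copies of $H_{n-1}$ from the edge blocks. If you want to salvage your route, you would need an analogous structural lemma — for instance, closed forms or recursions for $H(P_n)$ and $C(P_n)$ separately, proved by the same kind of block decomposition — at which point you would be doing comparable work to the paper's proof.
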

\begin{proof}
Let $G_n := F^n$, $H_n := G_{2^n-1} = F^{2^n-1}$.
(Figure \ref{FigFred1} shows $G_0=H_0$, $G_1=H_1$, $G_2$, $G_3=H_2$,
$G_4$, $G_5$, $G_6$, $G_7=H_3$, and $G_8$.)
By definition,
\beql{EqFred3}
H_{n+1} ~=~ F^{2^n} H_n,
\eeq
and, from Theorem \ref{Th1},  $a_n = |G_n|$, $b_n=|H_n|$.

Since $F$ has diameter 3, the nonzero terms $x^i y^j$ in $H_n$ satisfy
\beql{EqFred4}
-(2^n-1) ~\le i,j \le 2^n-1,
\eeq
so we can write
\beql{EqFred5}
H_n ~=~ \sum_{i=-(2^n-1)}^{2^n-1} \,
\sum_{j=-(2^n-1)}^{2^n-1}
H_n(i,j) x^i y^j, 
\eeq
where the coefficient $H_n(i,j) \in \GF(2)$ gives
the state of the cell $(i,j)$ at generation $n$.

From \eqn{EqFred3} and \eqn{EqFD}, $H_{n+1}$ is the sum (in $\sR$)
of eight copies of $H_n$, translated by $2^n$
in each of the N, NW, W, SW, S, SE, E, and NE directions.
That is, for $n \ge 1$,
\begin{align}\label{Eq211.1}
H_{n+1}(i,j) &~=~ 
H_n(i,j-2^n) + H_n(i-2^n,j-2^n) + H_n(i-2^n,j) + H_n(i-2^n,j+2^n)  \nonumber \\
&~~+ H_n(i,j+2^n)  + H_n(i+2^n,j+2^n) + H_n(i+2^n,j) + H_n(i+2^n,j-2^n),
\end{align}
where we adopt the convention that $H_n(i,j)=0$ unless $i$ and $j$
satisfy \eqn{EqFred4}.
Also,\beql{EqFred6}
H_0(0,0)=1, ~~ H_0(i,j)=0 \mbox{~for~} (i,j) \ne (0,0),
\eeq
and $H_1(i,j)=0$ except for
\begin{align}\label{EqFred7}
H_1(0,1)&=H_1(-1,1)=H_1(-1,0)= H_1(-1,-1) \nonumber \\
&=H_1(0,-1)=H_1(1,-1)=H_1(1,0)=H_1(1,1)=1.
\end{align}
By construction, $H_n$ is preserved by the action of the dihedral
group of order 8 (the symmetry group of the square),
generated by the action of $(x,y) \leftrightarrow (y,x)$
and $(x,y) \leftrightarrow (\frac{1}{x},y)$.
We study $H_n$ by breaking it up into the central cell,
the four parts on the axes, and the four quadrants. \\

\noindent
\textbf{The central cell.}
The central cell $H_n(0,0)=1$ if $n=0$,
and (as a consequence of the 8-fold symmetry) is 0 for
$n>0$.
\textbf{The axial parts.}
We define $X_n$ ($n \ge 1$) to be the portion of $H_n$ that lies
on the positive $x$-axis, but normalized so that its center is at the origin:
\beql{EqFred7a}
X_n ~:=~ \frac{1}{x^{2^{n-1}}} \, \sum_{i=1}^{2^n-1} H_n(i,0) x^i.
\eeq
For example, $X_1=1, X_2=\frac{1}{x}+x,
X_3 = \frac{1}{x^3}+\frac{1}{x}+x+x^3$.
From \eqn{Eq211.1} it follows by induction that, for $n \ge 2$,
\beql{EqFred8}
X_n ~=~ \sum_{i=0}^{2^{n-2}-1}\left(x^{-(2i+1)} + x^{2i+1}\right) ~=~
 \left(\frac{1}{x}+x\right)^{2^{n-1}-1}.
\eeq
Similarly,
the portion of $H_n$ that lies
on the negative $x$-axis, normalized so that its center is at the origin, is
\beql{EqFred9}
\widetilde{X}_n ~:=~ x^{2^{n-1}} \, \sum_{i=1}^{2^n-1} H_n(-i,0) x^{-i}
 ~=~  \left(\frac{1}{x}+x\right)^{2^{n-1}-1} ~=~ X_n.
\eeq
Likewise, the normalized portions of $H_n$ on the positive and negative $y$-axes are
\beql{EqFred10}
Y_n ~=~ \widetilde{Y_n} ~=~ \left(\frac{1}{y}+y\right)^{2^{n-1}-1}.
\eeq
\noindent
\textbf{The four quadrants.}
Next, define $\I_n$ for $n\ge 1$ to consist of the portion
of $H_n$ lying in the first quadrant, again
normalized so that its center is at the origin:
\beql{EqFredI}
\I_n ~:=~ \frac{1}{(xy)^{2^{n-1}}} \, \sum_{i=1}^{2^n-1}  \sum_{j=1}^{2^n-1} H_n(i,j) x^i y^j.
\eeq
Similarly, we define
\begin{align}\label{EqFredII}
\II_n &~:=~ \left(\frac{x}{y}\right)^{2^{n-1}} \, \sum_{i=1}^{2^n-1}  \sum_{j=1}^{2^n-1} H_n(-i,j) x^{-i} y^j, \nonumber \\
\III_n &~:=~ (xy)^{2^{n-1}} \, \sum_{i=1}^{2^n-1}  \sum_{j=1}^{2^n-1} H_n(-i,-j) x^{-i} y^{-j}, \nonumber  \\
\IV_n &~:=~ \left(\frac{y}{x}\right)^{2^{n-1}} \, \sum_{i=1}^{2^n-1}  \sum_{j=1}^{2^n-1} H_n(i,-j) x^i y^{-j}.
\end{align}
Assembling the parts, we see that, for $n \ge 1$, $H_n = $
\begin{eqnarray}
 \begin{matrix}
 & ~ & (y/x)^{2^{n-1}} \II_n  & + & y^{2^{n-1}}  Y_n & + & (xy)^{2^{n-1}} \I_n \\
&  + & (1/x)^{2^{n-1}} ~\widetilde{X}_n  & + & 0  & + & ~~x^{2^{n-1}} ~X_n \\
 & + & (xy)^{-2^{n-1}} \III_n  & + &  y^{-2^{n-1}} \widetilde{Y}_n
 & + & (x/y)^{2^{n-1}} \IV_n \nonumber
\end{matrix} 
\end{eqnarray}
which we write as a matrix
\begin{eqnarray}
 H_n ~=~ \begin{bmatrix}
\II_n   &  Y_n  &  \I_n \\
\widetilde{X}_n  & 0  &  X_n \\
\III_n   &  \widetilde{Y}_n &  \IV_n 
\end{bmatrix}, 
\label{EqFred12}
\end{eqnarray}
where it is to be understood that the blocks are to be shifted by the appropriate
amounts (that is, the $\I_n$ in the top right corner
is to be multiplied by $(xy)^{2^{n-1}}$, and so on).
By summing the eight translated copies of $H_n$, as in \eqn{Eq211.1},
we obtain
\begin{eqnarray}
H_{n+1} =  
\begin{bmatrix}
\II_n & Y_n &\I_n+\II_n & Y_n &\I_n+\II_n & Y_n &\I_n \\
\widetilde{X}_n & 0 & X_n+\widetilde{X}_n & 0 & X_n+\widetilde{X}_n & 0 & X_n \\
\II_n+\III_n & Y_n+\widetilde{Y}_n &\I_n+\III_n+\IV_n & \widetilde{Y}_n & \II_n+\III_n+\IV_n & Y_n+\widetilde{Y}_n &\I_n+\IV_n \\
\widetilde{X}_n & 0 & X_n & 0 & \widetilde{X}_n & 0 & X_n \\
\II_n+\III_n & Y_n+\widetilde{Y}_n &\I_n+\II_n+\IV_n & Y_n & \I_n+\II_n+\III_n & Y_n+\widetilde{Y}_n &\I_n+\IV_n \\
\widetilde{X}_n & 0 & X_n+\widetilde{X}_n & 0 & X_n+\widetilde{X}_n & 0 & X_n \\
\III_n & \widetilde{Y}_n & \III_n+\IV_n & \widetilde{Y}_n & \III_n+\IV_n & \widetilde{Y}_n & \IV_n
\end{bmatrix}.
\label{EqFred13}
\end{eqnarray}
Using \eqn{EqFred9} and \eqn{EqFred10}, we have
\begin{eqnarray}
\II_{n+1} =
\begin{bmatrix}
\II_n & Y_n &\I_n+\II_n \\
X_n & 0 & 0 \\
\II_n+\III_n & 0 &\I_n+\III_n+\IV_n 
\end{bmatrix},~~
\I_{n+1} =
\begin{bmatrix}
\I_n+\II_n & Y_n &\I_n \\
 0 & 0 & X_n \\
\II_n+\III_n+\IV_n & 0 &\I_n+\IV_n 
\end{bmatrix},
\label{EqFred14}
\end{eqnarray}
\begin{eqnarray}
\III_{n+1} =
\begin{bmatrix}
\II_n+\III_n & 0 &\I_n+\II_n+\IV_n \\
 X_n & 0 & 0 \\
\III_n & Y_n &\III_n+\IV_n 
\end{bmatrix},~~
\IV_{n+1} =
\begin{bmatrix}
\I_n+\II_n+\III_n & 0 &\I_n+\IV_n \\
0 & 0 & X_n \\
\III_n+\IV_n & Y_n & \IV_n 
\end{bmatrix},
\label{EqFred15}
\end{eqnarray}
By adding these four matrices
we find that $\I_{n+1}+\II_{n+1}+\III_{n+1}+\IV_{n+1}=0$
for $n \ge 1$. This identity is also true for $n=0$,
and we conclude that 
\beql{EqFred16}
\I_n+\II_n+\III_n+\IV_n=0, \quad n \ge 1,
\eeq
and so
\begin{eqnarray}
\II_{n+1} =
\begin{bmatrix}
\II_n & Y_n &\I_n+\II_n \\
X_n & 0 & 0 \\
\II_n+\III_n & 0 &\II_n 
\end{bmatrix},~~
\I_{n+1} =
\begin{bmatrix}
\I_n+\II_n & Y_n &\I_n \\
 0 & 0 & X_n \\
\I_n & 0 &\I_n+\IV_n 
\end{bmatrix}, ~~
\label{EqFred18}
\end{eqnarray}
etc., and finally that, for $n \ge 1$,
\begin{eqnarray}
H_{n+1} =  
\begin{bmatrix}
\II_n & Y_n &\I_n+\II_n & Y_n &\I_n+\II_n & Y_n &\I_n \\
X_n & 0 & 0 & 0 & 0 & 0 & X_n \\
\II_n+\III_n & 0 &\II_n & Y_n & \I_n & 0 &\I_n+\IV_n \\
X_n & 0 & X_n & 0 & X_n & 0 & X_n \\
\II_n+\III_n & 0 & \III_n & Y_n & \IV_n & 0 &\I_n+\IV_n \\
X_n & 0 & 0 & 0 & 0 & 0 & X_n \\
\III_n & Y_n & \III_n+\IV_n & Y_n & \III_n+\IV_n & Y_n & \IV_n
\end{bmatrix}.
\label{EqFred19}
\end{eqnarray}
In \eqn{EqFred19} we see that $H_{n+1}$ contains a copy
of $H_n$ at its center. The four corner blocks together
with two copies each of $X_n$ and $Y_n$ form another,
``deconstructed'', copy of $H_n$.

Suppose $n \ge 2$, and consider the blocks $[\I_n+\II_n ~ Y_n ~ \I_n+\II_n ~  Y_n]$
in the top row of \eqn{EqFred19}.
Using  \eqn{EqFred18} these
blocks can be expanded to give
\begin{eqnarray}
\begin{bmatrix}
\I_{n-1} & 0 & \II_{n-1} & Y_{n-1} & \I_{n-1} & 0 & \II_{n-1} & Y_{n-1} \\
X_{n-1} & 0 & X_{n-1} & 0 & X_{n-1} & 0 & X_{n-1} & 0 \\
\IV_{n-1} & 0 & \III_{n-1} & Y_{n-1} & \IV_{n-1} & 0 & \III_{n-1} & Y_{n-1}
\end{bmatrix}.
\label{EqFred20}
\end{eqnarray}
The central three columns (columns 3, 4, and 5)
 give a copy of $H_{n-1}$, and columns 7, 8, and 1, in that order,
 give another copy.
We get two further copies of $H_{n-1}$ from the
analogous blocks in each of the other three sides of \eqn{EqFred19},
so in total $H_{n+1}$ contains as many ON cells as are in 
two copies of $H_n$ plus eight copies of $H_{n-2}$.
This implies \eqn{EqFred2} for $n \ge 2$.
Equation \eqn{EqFred2} is certainly true for $n=1$, so this completes the proof of the theorem.
\end{proof}

\begin{figure}[!h]
\centerline{\includegraphics[angle=0, width=4.00in]{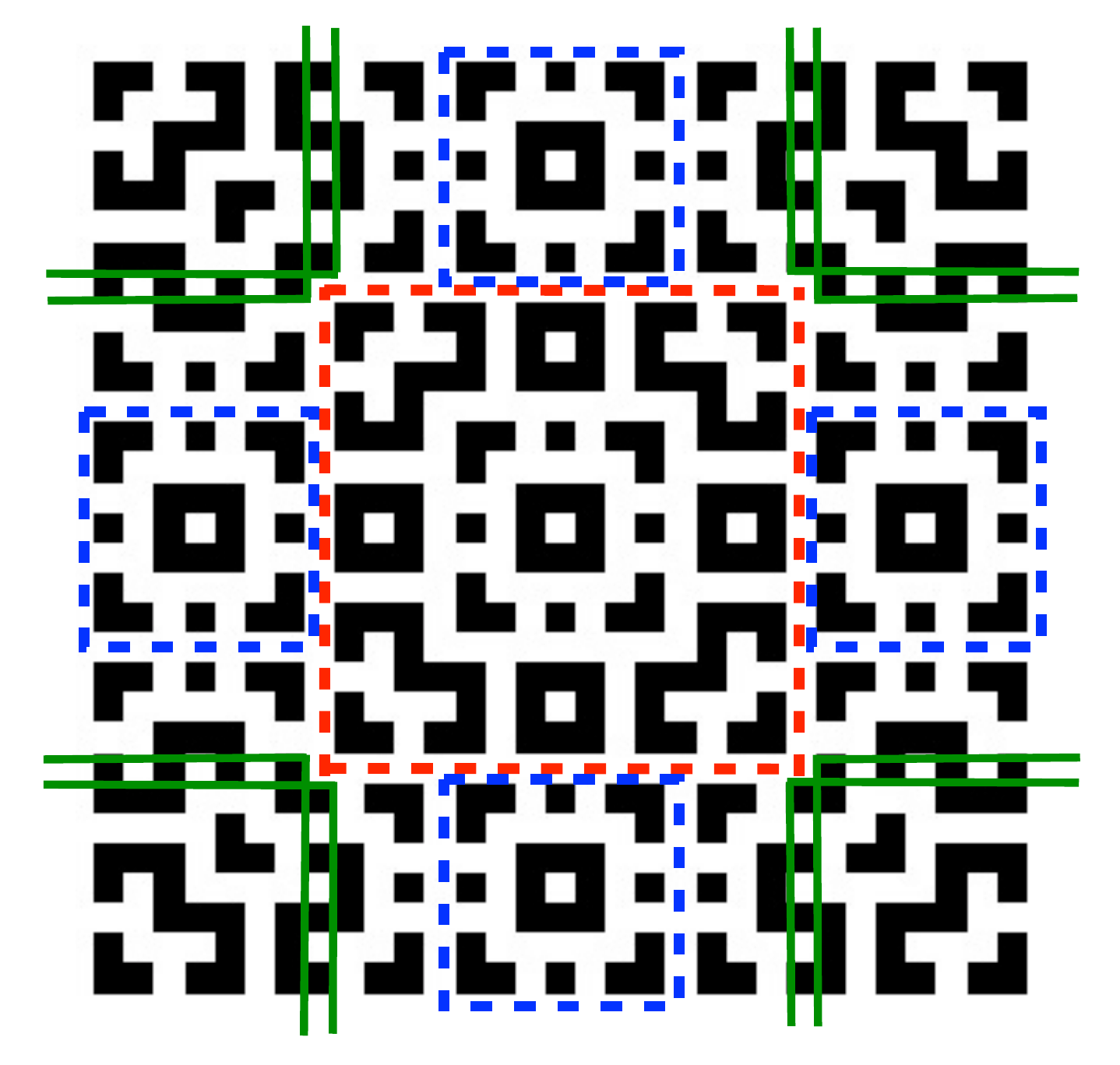}}
\caption{Dissection of $H_4$ into pieces that can be reassembled
to give two copies of $H_3$ and eight copies
of $H_2$, illustrating \eqn{EqFred2} in the case $n=3$.} 
\label{FigFred2}
\end{figure}

The characteristic polynomial of \eqn{EqFred2} is $x^2-2x-8 = (x+2)(x-4)$,
and it follows that
\beql{EqFred1}
b_n ~=~ \frac{5.4^n + (-2)^{n+1}}{3}, ~n \ge 0.
\eeq

In geometric terms, the proof of Theorem \ref{ThFred1}
 shows that $H_{n+1}$  can be dissected into pieces 
that can be reassembled to give two copies of $H_n$ and eight copies of $H_{n-1}$.
Figure \ref{FigFred2} shows this dissection in the case of $H_4$.
The central dashed square (colored red in the online version) encloses a copy of $H_3$.
The  smaller dashed squares on the four sides (colored blue) enclose copies of $H_2$.
The four pairs of vertical parallel lines (colored green) enclose copies
of $Y_3$, and 
the four pairs of horizontal parallel lines (also  green) enclose copies
of $X_3$. The four corners (outside the parallel lines) are,
reading counter-clockwise from the top right corner,
respectively $\I_3$, $\II_3$, $\III_3$, and $\IV_3$, and combine
with two copies each of $X_3$ and $Y_3$ to give the second copy of $H_3$.
Along the top edge, the figure is divided into seven pieces, as in 
the top row of \eqn{EqFred19}. By taking the fifth, sixth, and third
pieces in that order gives another copy of $H_2$,
and three further copies are obtained from the other edges of the figure.

From Theorem \ref{ThRec1} and \eqn{EqFred2}, we see that $a_n$ 
is given by the recurrence 
\beql{EqFred21}
a_{2t}=a_t, \quad a_{4t+1}=8a_t, \quad a_{4t+3}=2a_{2t+1}+8a_t,
\eeq
for $t>0$, with $a_0=1$, as conjectured by Hrothgar \cite{Hroth}.


\section{The centered von Neumann neighborhood} \label{SecVN}

In this section we analyze the two-dimensional 
odd-rule CA defined by the five-celled neighborhood
\beql{EqVN1}
F ~=~ \frac{1}{x}+1+x+\frac{1}{y}+y ~\in~ \sR = \GF(2)[x,x^{-1},y,y^{-1}]
\eeq
shown in Fig.~\ref{FigXX}(vi), 
and consisting of the von Neumann neighborhood together with its center.
(This is the five-neighbor totalistic Rule 614.)
We use the same notation as in the previous section,
except that now $F$ is defined by \eqn{EqVN1} instead of
\eqn{EqFred0}.

The initial values of $[a_n, n \ge 0]$ are
$$
\mathbf{1}, \mathbf{5}, 5, \mathbf{17}, 5, 25, 17, \mathbf{61}, 5, 25, 25, 85, 17, 85, 61, \mathbf{217},\ldots
$$
(\seqnum{A072272}), which we know from Theorem \ref{Th2}
is the run length transform of the subsequence $[b_n, n \ge 0] =  
1, 5, 17, 61, 217, 773, 2753, \ldots$, shown in bold  (\seqnum{A007483}).
Generations 0 through 8 are shown in Fig.~\ref{Fig3}, and Figs.~\ref{Fig5} and
\ref{Fig6} show generation 31.

\begin{figure}[!h]
\centerline{\includegraphics[angle=0, width=4.00in]{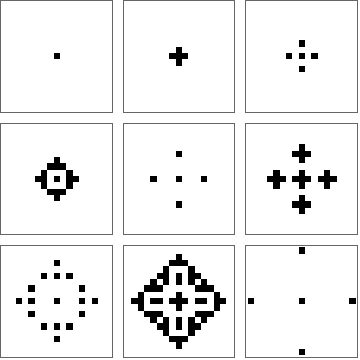}}
\caption{Generations 0 through 8 of the odd-rule CA defined
by the centered von Neumann neighborhood. Generations 0, 1, 3, 7 show
$H_0$, $H_1$, $H_2$, $H_3$ respectively.}
\label{Fig3}
\end{figure}

\begin{figure}[!h]
\centerline{\includegraphics[angle=0, width=4.00in]{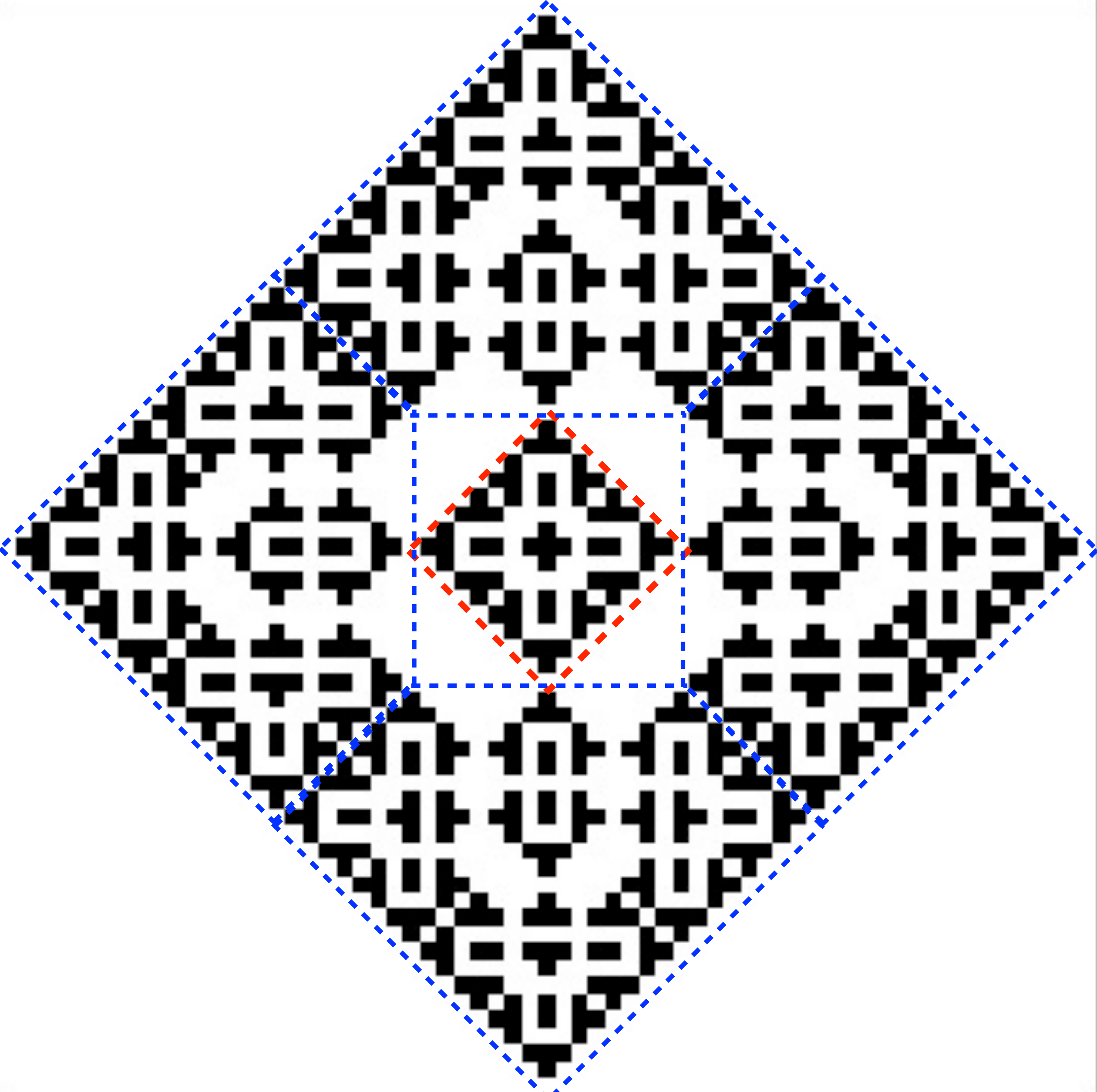}}
\caption{Generation 31 ($H_5$), showing dissection into 
central copy of  $H_3$ (inside red dashed line) and four ``haystacks'' 
$N_5$, $W_5$, $S_5$, $E_5$ (each enclosed by blue dashed line).}
\label{Fig5}
\end{figure}

\begin{figure}[!h]
\centerline{\includegraphics[angle=0, width=4.00in]{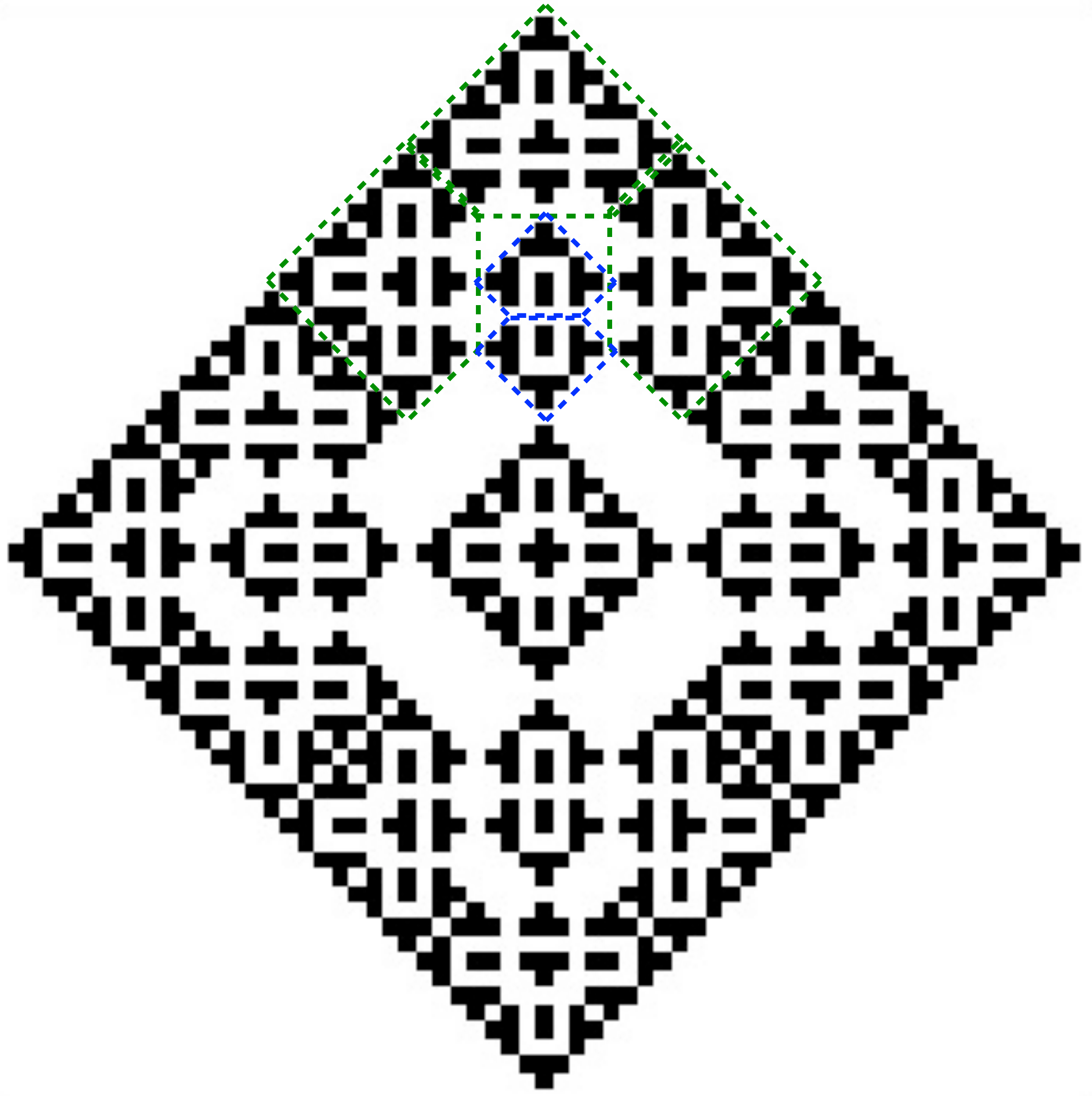}}
\caption{Another view of generation 31 ($H_5$) showing
dissection of haystack $N_5$ into three smaller 
haystacks $N_4$, $W_4$, $E_4$ (each enclosed in 
green dashed line) and two still smaller haystacks $N_3$ and $S_3$
(inside blue dashed lines).}
\label{Fig6}
\end{figure}

\begin{thm}\label{Th5}
The sequence $[b_n, n \ge 0]$ 
 satisfies the recurrence
\beql{EqVN2}
b_{n+1}~=~ 3b_n+2b_{n-1}, ~ \mbox{~with~} b_0=1, b_1=5.
\eeq
\end{thm}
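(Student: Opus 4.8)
The plan is to mimic the geometry behind Theorem \ref{ThFred1}, but to organize it around the ``haystacks'' of Figures \ref{Fig5} and \ref{Fig6} rather than the quadrant blocks $\I_n,\II_n,\III_n,\IV_n$. As before write $H_n := F^{2^n-1}$, so that $b_n = |H_n|$ by Theorem \ref{Th1}, and use $H_{n+1} = F^{2^n}H_n$. Since $F$ is now the plus-shaped neighborhood \eqn{EqVN1}, the Freshman's Dream \eqn{EqFD} gives $F^{2^n} = 1 + x^{2^n} + x^{-2^n} + y^{2^n} + y^{-2^n}$, so $H_{n+1}$ is the mod-$2$ sum of five copies of $H_n$: a central copy together with copies translated by $2^n$ in the N, S, E, and W directions. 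Because $\height(H_n) = 2^n-1$ while the translation distance is $2^n$, each directional copy overlaps the central copy in a strip of width $2^n-1$, and, crucially, two adjacent directional copies overlap the central copy simultaneously in each of the four corner squares of side $2^n-1$; controlling these single and triple overlaps mod $2$ is where the work lies. Throughout I would exploit the $D_4$ symmetry of $H_n$ (invariance under $x\leftrightarrow y$ and $x\leftrightarrow x^{-1}$), which makes the four directional pieces congruent.

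First I would set up the two dissections suggested by the figures. Cutting $H_n$ along the bounding square of its central sub-copy and extending those cuts along the diagonals produces a central region and four congruent haystacks $N_n, S_n, E_n, W_n$; write $h_n := |N_n|$. The dissection of Figure \ref{Fig5} asserts that, after mod-$2$ cancellation in the overlap strips, the five copies reorganize into a central copy of $H_{n-1}$ plus the four haystacks, giving
\beql{PlanI}
b_{n+1} = b_{n-1} + 4\,h_{n+1}.
\eeq
The dissection of Figure \ref{Fig6} then cuts a single haystack $N_{n+1}$ into three haystacks of the next level down ($N_n, W_n, E_n$) and two of the level below that ($N_{n-1}, S_{n-1}$), giving the three-term relation
\beql{PlanII}
h_{n+1} = 3\,h_n + 2\,h_{n-1}.
\eeq
Both \eqn{PlanI} and \eqn{PlanII} I would prove by a single simultaneous induction on $n$, carrying along explicit descriptions of the axial and diagonal parts of $H_n$. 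On the central row the five-copy recurrence collapses (the N and S copies cannot reach it since $\height(H_n)<2^n$) to the one-dimensional rule $R_{n+1} = (1+x^{2^n}+x^{-2^n})R_n$, whence $R_n = (1+x+x^{-1})^{2^n-1}$; the diagonals admit a similar closed form. This is the analogue of the Fredkin proof carrying along $X_n$, $Y_n$ and the identity $\I_n+\II_n+\III_n+\IV_n = 0$.

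Finally I would combine \eqn{PlanI} and \eqn{PlanII}. Substituting \eqn{PlanII} into \eqn{PlanI} and replacing each $4h_k$ by $b_k-b_{k-2}$ (from \eqn{PlanI} at lower indices) eliminates the haystacks and yields the fourth-order recurrence $b_{n+1} = 3b_n + 3b_{n-1} - 3b_{n-2} - 2b_{n-3}$, whose characteristic polynomial factors as $(t^2-3t-2)(t^2-1)$. Hence the defect $\tilde c_n := b_{n+1} - 3b_n - 2b_{n-1}$ satisfies $\tilde c_{n+1} = \tilde c_{n-1}$, i.e.\ it is periodic of period $2$; since the known initial values give $\tilde c_1 = b_2-3b_1-2b_0 = 0$ and $\tilde c_2 = b_3-3b_2-2b_1 = 0$, it vanishes identically, which is exactly \eqn{EqVN2}. (Equivalently, the spurious roots $\pm1$ introduced by passing through the haystacks are annihilated by the initial conditions $b_0=1$, $b_1=5$.) The main obstacle is entirely geometric: unlike Fredkin's Moore neighborhood, the plus shape forces adjacent arms to overlap the centre in shared corner squares, so the scissor cuts must be placed precisely along the diagonals and the mod-$2$ cancellations in those corners verified with care. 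I expect the most delicate point to be the inductive control of the haystack boundary in \eqn{PlanII} — showing that a level-$(n+1)$ haystack really decomposes into genuine lower-level haystacks with no cells left over.
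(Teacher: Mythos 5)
Your proposal follows essentially the same route as the paper's proof: the same two-level dissection (a central copy of $H_{n-2}$ plus four haystacks, then each haystack into three haystacks of the next level down and two of the level below), the same counts $b_n=b_{n-2}+4\nu_n$ and $\nu_n=3\nu_{n-1}+2\nu_{n-2}$, and the same elimination of the spurious characteristic roots $\pm1$ by the initial values. One correction and one remark: since $F$ is the plus shape, $H_n$ is the diamond $\{\,|i|+|j|\le 2^n-1\,\}$ rather than a square, so adjacent directional translates at distance $2^n$ never meet and there are no triple overlaps or corner squares to control (the paper stresses that these cuts are mere ``tearing along perforations,'' unlike the Fredkin case); and the paper verifies the two dissection identities not by your proposed simultaneous induction but by exhibiting the closed form $N_n = H_{n-2}\,T_N^{2^{n-2}}$ with $T_N=x^{-1}+1+x+y$, which via the Freshman's Dream reduces everything to the single finite identity $H_2 = 1+y^2T_N+x^{-2}T_W+y^{-2}T_S+x^2T_E$.
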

\begin{proof}
As in the proof of Theorem \ref{ThFred1}, $H_n = F^{2^n-1}$,
 $b_n = |H_n|$, and again $H_n$ is  preserved 
under
the action of a  dihedral group of order 8.
The first step in the proof is to show
that  for $n \ge 2$, $H_n$ can be dissected into
a central copy of $H_{n-2}$  and four disjoint 
pentagonal or ``haystack''-shaped regions
(see Fig.~\ref{Fig5} for the dissection of $H_5$).
The four haystacks in $H_n$ will be
denoted by $N_n$, $W_n$, $S_n$, and $E_n$, 
according to the direction in which  they point (a precise definition
will be given below).
They are equivalent under the action
of the dihedral group. Algebraically, we will show that
\beql{EqVN3}
H_n ~=~ H_{n-2} + y^{2^{n-1}} N_n 
+ x^{-2^{n-1}} W_n +  y^{-2^{n-1}} S_n + x^{2^{n-1}} E_n,
\eeq 
where the five polynomials on the right are disjoint 
(i.e., have no monomials in common with each other).

Once we have established \eqn{EqVN3}, the second
step in the proof will be to show that each haystack can be dissected into
five smaller haystacks (see Fig.~\ref{Fig6} for the dissection of $N_5$).
In particular, we will show that for $n \ge 3$,
\beql{EqVN4}
N_{n} ~=~ 
y^{2^{n-2}} N_{n-1}+x^{-2^{n-2}} W_{n-1}+x^{2^{n-2}} E_{n-1}
+N_{n-2}+y^{-2^{n-3}} S_{n-2},
\eeq
where again the polynomials on
the right are disjoint.
Let $\nu_n := |N_n| = |W_n| = |S_n| = |E_n|$. Then
\eqn{EqVN4} implies $\nu_n = 3\nu _{n-1}+2 \nu _{n-2}$.
From \eqn{EqVN3} we have
$b_n =b_{n-2}+4\nu _n$,
so
$$
b_{n+1} -3b_{n}-2b_{n-1} = b_{n-1}-3b_{ n-2} -2b_{n-3}
= \cdots = \mbox{~either~} b_3-3b_2-2b_1 
 \mbox{~or~} b_2-3b_1-2b_0,
 $$
 and each of the last two expressions evaluates to zero.
 This will complete the proof of \eqn{EqVN2}.
 It is worth remarking that these dissections 
 are of a different nature from the dissection in the previous
 section. There it was necessary to make some
 non-obvious cuts through the contiguous blocks
 of ON cells, as shown by the parallel (green) lines
 in the corners of Fig.~\ref{FigFred2}. 
 In contrast, in the present proof, the dissections are carried out by 
 ``tearing'' along the obvious  ``perforations'', rather like tearing apart a block
 of postage stamps.

Now to the details.  It follows
from the definition (see the sequence
of successive states in Fig.~\ref{Fig3}) that
$H_n$ is a diamond-shaped configuration
with extreme points $(\pm(2^n-1),0)$, $(0,\pm(2^n-1))$.
Also,  for $n \ge 2$, $H_n$ contains a copy of $H_{n-2}$ at its center,
surrounded by a layer, at least one cell wide,
of OFF cells. This follows from the identity
$$
H_n - H_{n-2} ~=~ F^{2^n-1}-F^{2^{n-2}-1} 
~=~ H_{n-2}(1+H_2^{2^{n-2}}),
$$
upon checking that the right-hand side
contains no monomials $x^i y^j$ with $|i|+|j| \le 2^{n-2}$.
The buffer layer of OFF cells around the central $H_{n-2}$
consists of the cells $x^i y^j$ with
$|i|+|j| = 2^{n-2}$.

We define the $n$th North haystack to be
\beql{EqVN5}
N_n ~:=~ H_{n-2}\, T_N^{2^{n-2}}, ~n \ge 2,
\eeq
where $T_N := 1/x+1+x+y$ is the four-celled North-pointing triangle
shown in Fig.~\ref{FigXX}(iii). Similarly, the West, South, and East haystacks are
\beql{EqVN6}
W_n ~:=~ H_{n-2}\, T_W^{2^{n-2}},~
S_n ~:=~ H_{n-2}\, T_S^{2^{n-2}},~
E_n ~:=~ H_{n-2}\, T_E^{2^{n-2}},
\eeq
where $T_W := 1/x +1/y+1+y$ is a West-pointing version 
of $T_N$, and similarly for $T_S$ and $T_E$.
(The simple expressions in \eqn{EqVN5} and \eqn{EqVN6}
were guessed by computing the actual haystacks in $H_3$ to $H_6$ and
using Maple to factor them in $\sR$.)
The haystack $N_n$ has the property that all its cells are on or inside
the convex hull of the five cells $x^iy^j$ with $(i,j)$ equal to
\beql{EqVN7}
(0,2^{n-1}-1), (-2^{n-1}+1,0), 
 (-2^{n-2},-2^{n-2}+1), (2^{n-2}-2^{n-2}+1),
  (-2^{n-1}+1,0).
 \eeq
 To see this, consider what happens to $N_n$ one
 generation later: it becomes 
 \beql{EqVN7a}
 F N_n ~=~ (F T_N)^{2^{n-2}},
 \eeq
using the Freshman's Dream \eqn{EqFD}, where $FT_n$ 
is the six-celled configuration 
$$
y^2+\frac{1}{x^2}+\frac{1}{xy} + \frac{1}{y} + \frac{x}{y}+x^2.
$$
From \eqn{EqVN7a},  $F N_n$ is this configuration with the cells moved
$2^{n-2}$ steps apart, and the cells \eqn{EqVN7} 
lie just inside it. 

Note that the point $(0,0)$ is in the interior
of $N_n$ at the intersection of the vertical line through
the apex and the horizontal line joining the most Western and Eastern points.
The powers of $x$ and $y$ in \eqn{EqVN3} and \eqn{EqVN4}
are needed in order to translate the haystacks
into their correct positions. Since we now know the boundaries of 
all the terms on the right side of
\eqn{EqVN3}, we can check that these five polynomials are indeed disjoint.
We must still check that \eqn{EqVN3} is an identity.
Using the Freshman's Dream, this reduces to 
checking the identity
$$
H_2 ~=~ 1 + y^2 T_N + x^{-2} T_W+y^{-2} T_S+x^2 T_E,
$$
which is true. This completes the proof that \eqn{EqVN3} is a 
proper dissection of $H_n$. The correctness of the dissection \eqn{EqVN4}
is verified in a similar way; we omit the details.
\end{proof}


\section{Other Cellular Automata}\label{SecOther}

\subsection{The 256 one-dimensional rules}\label{Sec61}
There are 256 possible CAs based on the
one-dimensional three-celled  neighborhood
shown in Fig.~\ref{FigXX}(i). 
These are the CAs labeled Rule 0 through Rule 255
in the Wolfram numbering scheme
\cite{PaWo85, Wolf84, NKS}. 
As usual we assume
the automaton is started with a single ON cell, and let
$a_n$ denote the number of ON
cells after $n$ generations.  Illustrations of
the initial generations of all 256 CAs are 
shown on pages 54--56 of \cite{NKS}.
Many of these sequences were analyzed in \cite{Wolf83}; see also \cite{Weiss30}.
If we eliminate those in which some $a_n$ is infinite,
or the sequence $[a_n, n \ge 0]$ is trivial (essentially linear),
or is a duplicate of
one of the others, we are left with just seven sequences:
\begin{list}{\textbullet}{\setlength{\itemsep}{0.00in}}
\item
Rule 18 (or Rule 90; Rule 182 is very similar): $a_n = 2^{\wt(n)}$ (Gould's
sequence \seqnum{A001316}), where $\wt(n)$ is the number of 1s in the
binary expansion of $n$. This is the run length transform of the powers of 2.
\item
Rule 22:  $a_n = 2^{\wt(n)}$ if $n$ even, $3.2^{\wt(n)-1}$ if $n$ odd (\seqnum{A071044}).
\item
Rule 30: The behavior appears chaotic, 
even when started with a single ON cell \cite{Weiss30, 
NKS}.  The sequence, \seqnum{A070952}, has roughly linear growth, but
it seems likely that there is no simpler way to obtain it than by its 
definition.
\item
Rule 62: $a_{n+7}=a_{n+4}+a_{n+3}-a_n$ 
with initial terms $1,3,3,6,5,8,9$ (\seqnum{A071047}).
\item
Rule 110: Although the initial  behavior is chaotic, 
it is an astonishing fact, pointed out by Wolfram \cite[p.~39]{NKS},
that after about three thousand terms all the irregularities disappear.
By using the Salvy-Zimmermann  \texttt{gfun} package in Maple \cite{GFUN}, we
find that the sequence, \seqnum{A071049}, satisfies a linear recurrence
of order 469: for $n \ge 2854$,
\beql{eqGFUN}
a_{n + 469} ~=~ -a_{n + 453}+a_{n + 256}+ a_{n + 240}+a_{n + 229}+a_{n + 213}
-a_{n + 16}-a_n.
\eeq 
This recurrence is far nicer than it initially appears: the
coefficients are palindromic, and its characteristic polynomial 
is the product of 25 irreducible factors.
\item
Rule 126: $a_n= 2^{\wt(n)+1}$, except we must subtract 1 if $n=2^k-1$ for some $k$ (\seqnum{A071051}).
\item
Rule 150: This is the odd-rule CA defined by the three-celled neighborhood. 
The sequence $[a_n, n \ge 0]$ (\seqnum{A071053})
was analyzed by Wolfram \cite{Wolf83}
 (see also \cite{HH02}, \cite{Sillke}). 
 In the notation of the present paper $[a_n, n \ge 0]$ is the run length transform of the 
 Jacobstahl sequence \seqnum{A001045}.
 Theorems \ref{Th1} and  \ref{Th2} 
 were suggested by reading  Sillke's  analysis \cite{Sillke}.
\end{list}

\subsection{Other odd-rule CAs}\label{Sec62}
In this section we discuss  the odd-rule CAs
defined by the height-one neighborhoods  in Fig.~\ref{FigXX}.
(The height-two neighborhood of Fig.~\ref{FigXX}(iv) is
discussed in the last section of the paper.)
Table \ref{Tab62} summarizes the results.
The first column specifies the neighborhood $F$ in Fig.~\ref{FigXX}, 
$a_n(F)$ is the number of ON cells
at generation $n$, $b_n(F)$ denotes
the sequence of which $a_n(F)$ is the run length transform, and
the fourth column gives a  generating function (g.f.) for  
$b_n(f)$. 
For Fig.~(iii), $\Fib_{n+2}$ denotes a Fibonacci number.
The g.f. for (vii), found by Doron Zeilberger \cite{ESZ}, is
\beql{Eqviigf}
\frac { \left(1 + 2\,x \right)
\left(1 + x - x^2 + x^3 + 2\,{x}^{5} \right) }
{1 - 3\,x - 3\,{x}^{2}+{x}^{3}+6\,x^4 -10\,x^5 +8\,{x}^{6}-8\,{x}^{7}} \,.
\eeq
An expanded version of this table, analyzing all the
sequences arising from odd-rule CAs 
defined by height-one neighborhoods on the square grid,
will be published elsewhere \cite{3by3}.

\renewcommand{\arraystretch}{1.2}
 \begin{table}[!h]
\caption{ Odd-rule CAs defined by height-1 neighborhoods $F$ in Fig.~\ref{FigXX}.}
\label{Tab62}
$$
\begin{array}{|c|c|c|c|c|}
\hline
F & a_n(F) & b_n(F) &  \mbox{g.f.} & \mbox{Notes} \\
\hline
\mbox{(i)} & \seqnum{A071053} & \seqnum{A001045} & \frac{1+2x}{1-x-2x^2} & \mbox{Rule~150},~ \S\ref{Sec61}  \\
\mbox{(ii)} & \seqnum{A048883} & \seqnum{A000244} & \frac{1}{1-3x} & a_n = 3^{\wt(n)}  \\
\mbox{(iii)} & \seqnum{A253064} & \seqnum{A087206} & \frac{1+2x}{1-2x-4x^2} & b_n = 2^n\Fib _{n+2}  \\
\mbox{(v)} & \seqnum{A102376} & \seqnum{A000302} & \frac{1}{1-4x} & a_n = 4^{\wt(n)}  \\
\mbox{(vi)} & \seqnum{A072272} & \seqnum{A007483} & \frac{1+2x}{1-3x-2x^2} & \S\ref{SecVN}  \\
\mbox{(vii)} & \seqnum{A253069} & \seqnum{A253070} & \eqn{Eqviigf}
 &    \\
\mbox{(viii)} & \seqnum{A246039} & \seqnum{A246038} & \frac{(1+2x)(1+2x+4x^2)}{1-3x-8x^3-8x^4} & \\
\mbox{(ix)} & \seqnum{A160239} & \seqnum{A246030} & \frac{1+6x}{1-2x-8x^2} & \mbox{Fredkin~Replicator}, ~\S\ref{SecFR}  \\
\mbox{(x)} & \seqnum{A246035} & \seqnum{A139818} & \frac{1+6x-8x^2}{(1-x)(1+2x)(1-4x)} & \mbox{Squares~of~entries~from~(i)} \\
\hline
\end{array}
$$
\end{table}
\renewcommand{\arraystretch}{1.0}

\subsection{Further two-dimensional CAs}\label{Sec63}
If we drop the ``odd-rule'' definition, the number of CAs grows
astronomically---there are
$2^{512}$ based on the
Moore neighborhood alone. 
Pages 171--175 of \cite{NKS}
show many examples of the subset of ``totalistic'' rules,
in which the next state of a cell depends only on its present
state and  the total
number of ON cells surrounding it.
All of these are potential sources of sequences.
In a few cases it is possible to analyze the  sequence,
but usually it seems that no formula or recurrence exists.
In this section we give three examples: one that can be analyzed,
one that might be analyzable with further research,
and one (typical of the majority) where the 
state diagrams are aesthetically appealing but  finding
a formula seems hopeless. 
All three are  totalistic rules, the first two being
based on the von Neumann neighborhood (Fig.~\ref{FigXX}(v)),
and the third on the Moore neighborhood (Fig.~\ref{FigXX}(ix)).

The first example  is  the Rule 750 automaton,
in which an OFF cell turns ON if an odd number of its 
four neighbors are ON, and once a cell is ON it stays ON
\cite[p.~925]{NKS}.
This CA is a hybrid of the ``odd-rule'' CAs studied above
and the ``once a cell is ON it stays ON'' rules studied in \cite{Tooth}.
Here it is convenient to call the initial ON cell generation 1 (rather than 0).
The numbers of ON cells in the first few generations (\seqnum{A169707})  are given in
Table \ref{Tab750}.
 
\begin{table}[!h]
\caption{ Number of ON cells at $n$th generation of CA defined by Rule 750.}
\label{Tab750}
$$
\begin{array}{c|rrrrrrrrr}
n& && a_n \\
\hline
1& 1  \\
2-3& 5 & 9  \\
4-7& 21 & 25 & 37 & 57  \\
8-15& 85 & 89 & 101 & 121 & 149 & 169 & 213 & 281  \\
16-31& 341 & 345 & 357 & 377& 405 & 425 & 469 &  \ldots
\end{array}
$$
\end{table}

The evolution of this CA is similar to several that were studied
in \cite{Tooth}: at generation $2^k$, for $k \ge 2$, 
the structure is enclosed in a diamond-shaped region, 
which is saturated in the sense that no additional interior cells can ever be turned ON,
and contains $a_{2^k}= (4^{k+1}-1)/3$ ON cells.
Then in generations $2^k+1$ to $2^{k+1}-1$,
the structure grows outwards from the four vertices of the diamond,
and the first half of the growth that follows generation $2^{k}$ is  the same as the 
the growth that followed generation $2^{k-1}$.
Figure~\ref{Fig750} shows generation $20=2^4+4$,
where we can see that 16 cells have grown out of each vertex.
Pictures of generations $16=2^3+4$ and $36=2^5+4$  show exactly
the same growth from the vertices (although with
different numbers of ON cells in  the central diamond). 
 
\begin{figure}[!h]
\centerline{\includegraphics[angle=0, width=3.00in]{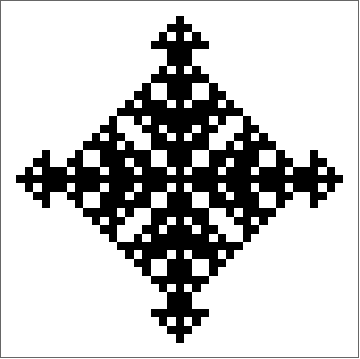}}
\caption{Generation 20 of CA defined by Rule 750, showing $341+4(1+3+5+7)=405$ ON cells, illustrating \eqn{Eq752}.}
\label{Fig750}
\end{figure}

 The successive numbers of ON cells added to a vertex 
 in the generations from  $2^k$ to
 $2^{k+1}-1$ are  $0, 1, 3, 5, 7, 5, 11, 17, 15, 5,\ldots$,
 which form the initial terms $v_0, v_1, v_2, \ldots$  of a sequence (\seqnum{A151548})
 encountered in \cite{Tooth}. The $v_i$ have generating function
 \beql{Eq751}
 \frac{x}{1+x} ~+~ 4 \, x^2  \, \prod_{r=1}^{\infty} (1+x^{2^r-1}+2 x^{2^r}).
 \eeq
Then, for $k\ge 0$ and $0 \le m <2^k$, we have
\beql{Eq752}
 a_{2^k+m} ~=~ \frac{4^{k+1}-1}{3}
 ~+~ 4 \sum_{i=0}^{m} v_i.
\eeq
Bearing in mind the warning in the first sentence
of this paper, we must admit that we have not written
out a complete proof that \eqn{Eq752} is correct. 
However, there should be no difficulty in filling in the details: as the automaton evolves
from generation $2^k$ to $2^{k+1}$, the structure has a natural
dissection into polygonal pieces.

The second example is more speculative: this is
 the  Rule 493 automaton
\cite[p.~173]{NKS}, \seqnum{A246333}.
The binary expansions of 493 and 750 differ
in just four places, so it is not surprising
that this is similar to the previous example.
Now an ON cell stays ON unless exactly zero or four of its neighbors
are ON, in which case it turns OFF, and an
OFF cell turns ON unless exactly two of its neighbors are ON.
Assuming here that we start with a single ON
cell at generation 0,  in the even-numbered generations
the number of ON cells is finite (\seqnum{A246334}): 
$$
1, 5, 17, 29, 61, 73, 109, 157, 229, 241, 277, 329, 429, 477, 573, 633, 861,  \ldots,
$$
while in the odd-numbered generations the number of OFF cells is finite (\seqnum{A246335}):
$$
1, 5, 9, 21, 25, 37, 57, 85, 89, 101, 121, 165, 169, 213, 217, 317, 321, 333, \ldots .
$$
The reason for hoping this automaton might be analyzable is that
the latter sequence agrees with the sequence in Table \ref{Tab750}
up though the eleventh term, 121, after which the sequences
diverge.
Even the respective states are the same up through the sixth term, 37,
although to see this one has to work with the negatives---in
the photographer's sense, interchanging black and while cells---and then
rotating the result by 45 degrees. This needs
further investigation.

\begin{figure}[!h]
\centerline{\includegraphics[angle=0, width=3.00in]{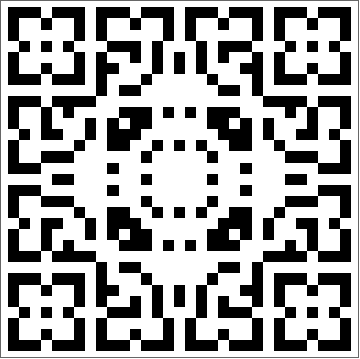}}
\caption{Generation 15 of CA defined by eight-neighbor Rule 780.}
\label{Fig780}
\end{figure}

The third example in the eight-neighbor Rule 780 (\seqnum{A246310}),
in which a cell turns ON if one or four of its neighbors in ON, and
otherwise turns OFF. Although the initial generations
are simple enough, already by generation 15 (Fig.~\ref{Fig780})
the structure is extremely complicated.  Is there a recurrence? 
Is the five-neighbor analog (\seqnum{A253086}) any easier to understand?

\subsection{The three-dimensional analog of Fredkin's Replicator}\label{Sec64}
The three-dimensional Moore neighborhood,
that is, the $3 \times 3 \times 3$ cube without its center cell,
gives rise to the sequence
$1, 26, 26, 124, 26, 676, 124, 1400, \ldots$ (\seqnum{A246031}),
which by Theorem \ref{Th2} is the run length
transform of the subsequence 
\beql{EqMM}
1, 26, 124, 1400, 10000, 89504, 707008, 5924480, 47900416, 393069824, 3189761536, 25963397888,\ldots
\eeq
(\seqnum{A246032}),
computed by Roman Pearce and Michael Monagan.
Doron Zeilberger \cite{ESZ} has found a generating function, a
rational function with numerator of degree $10$ and
denominator of degree $11$, as well as a proof that it is correct.

\section{Further remarks about run length transforms}\label{SecFurther}
\textbf{Block structure.}
It is a surprising fact that the growth sequences
of many CAs have a natural division into blocks of successive lengths 
$(1,)1,2,4,8,16,32,\ldots$. This is true even for some CAs
that are defined on lattices other than  $\ZZ^d$  \cite{Tooth}.
Some of these examples are explained by
the fact that the run length transform 
 always has this property---the division
 into blocks of the  run length transform $[T_n, n \ge 0]$ of an arbitrary 
 sequence $S = [1,A,B,C,D, \ldots]$
is shown in Table \ref{Tab0}.
Table \ref{TabFred1} above gives a concrete example.
The first half of each row is given by $A$ times the beginning of
the $[T_n]$ sequence itself.

\begin{table}[!h]
\caption{ The run length transform $[T_n, n \ge 0]$ of a sequence $S = [1,A,B,C,D, \ldots]$,
showing the division into blocks of sizes 
$1,1,2,4,8,16,\ldots$.}\label{Tab0}
$$
\begin{array}{c|rrrrrrrrrrrrrrrr}
n& &&&&&& T_n \\
\hline
0& 1  \\
1& A \\
2-3& A & B  \\
4-7& A & A^2 & B & C  \\
8-15& A & A^2 & A^2 & AB & B & AB & C & D  \\
16-31& A & A^2 & A^2 & AB & A^2 & A^3 & AB & AC & B & AB & AB & B^2 & C & AC & D & E  \\
32-63& A &  A^2 & A^2 & AB & A^2  & A^3 & AB   & AC  & \ldots   &   &   &   &
\end{array}
$$
\end{table}

\textbf{Further examples.}
We briefly mention four  additional examples of run length transforms.
The run length transform of  $0,1,2,3,4,\ldots$
is $1, 1, 1, 2, 1, 1, 2, 3, 1,\ldots$  (\seqnum{A227349}), which gives the product of the
lengths of runs of 1s in the binary representation of $n$. 
The primes, prefixed by 1, give $1, 2, 2, 3, 2, 4, 3, 5, 2, \ldots$ (\seqnum{A246029}). 
The squares give $1, 1, 1, 4, 1, 1, 4, 9,\ldots$ (\seqnum{A246595}).
The powers of 2 give 
$1, 2, 2, 4, 2, 4, 4, 8, 2,\ldots$,
 $2^{\wt(n)}$
 (\seqnum{A001316}, already mentioned in \S\ref{Sec61}).
 
 \textbf{Graphs.}
 The graphs of run length transforms are usually highly irregular, as one expects from Table 
 \ref{Tab0}. The partial sums of these
 sequences are naturally smoother, and generally have a family resemblance,
 with a bumpy appearance somewhat similar to
 what is seen  in the Tagaki curve \cite{Tagaki11}, \cite{Lag2011}.
 The partial sums of the
 four examples in the previous paragraph
 are \seqnum{A253083}, \seqnum{A253081}, \seqnum{A253082}, \seqnum{A006046},
 respectively, and the partial sums of the sequences arising
 from Fredkin's Replicator, the sequence in Sect. \ref{SecVN}, and the Rule 150
 sequence in Sect. \ref{SecOther}  are respectively
 \seqnum{A245542}, \seqnum{A253908}, \seqnum{A134659}.\footnote{The ``graph''
 button in \cite{OEIS} makes it easy to compare these graphs. However, it is not clear
 how the growth rate of the original sequence affects the
  ``bumpiness'' of the partial sums.}
The latter sequence is discussed in \cite{FSS}, and it
 would be interesting to see
if the methods of that paper
 can be applied to the other six sequences.
 Also, is there any direct connection between the limiting form of 
 these graphs for large $n$ and the Tagaki curve?
 
 \textbf{The generalized run length transform.}
 There are analogs of Theorem \ref{Th2}
 which apply to larger neighborhoods, although they are more complicated
 and not as useful. 
 The following is a version
 which applies when the neighborhood $F$ has height
 at most 2.
 Whereas in Theorem \ref{Th2}, $a_n(F)$ was expressed as a product
 of terms from the subsequence $a_m(F)$ where the binary expansion
 of $m$ contained no zeros, now we
  need the values $a_m(F)$ where $m$
 is any number whose binary expansion
 begins and ends with 1 and 
 does not contain any pair of adjacent zeros. 
 These are the numbers (\seqnum{A247648}) 
 \beql{EqGRLT}
 1, 3, 5, 7, 11, 13, 15, 21, 23, 27, 29, 31, 43, 45, 47, 53, 55, 59, 61, 63, \ldots .
 \eeq
 Suppose for simplicity that the binary expansion of $n$ has the form
$$
n ~=~ \overbrace{\ast \ast \cdots \ast}^{m_1}
\overbrace{00\cdots 0}^{m_2}
\overbrace{\ast \ast \cdots \ast }^{m_3},  \quad m_1, m_3 \ge 1, m_2 \ge 2,
$$
where the asterisks indicate strings of 0s and 1s that
begin and end with 1s and  do not contain any pair of adjacent zeros. 
 If the first such string represents $N_1$ and the second $N_2$, 
 then $a_n(F) = a_{N_1}(F) a_{N_2}(F)$.
 There is an analogous expression in the general case, 
 expressing $a_n(F)$ as a product of  terms
 $a_m(F)$ where $m$ belongs to \eqn{EqGRLT}.

To illustrate, suppose $F$ is the five-celled one-dimensional
neighborhood shown in Fig.~\ref{FigXX}(iv), with height 2. 
The initial values of $a_n(F)$ are
given in Table \ref{TabGRLT}, with $a_m(F)$ for $m$ in \eqn{EqGRLT}
shown in bold. For example, the binary expansion of 
167 is 10100111, so the generalized run
length transform tells us that $a_{167}(F) = a_5(F)a_7(F) = 17.19 = 323$.
It follows from the generalized run length transform property
that in each row of the table, the first one-eighth
of the terms coincide with 5 times the beginning of the sequence itself. 
 
\begin{table}[!h]
\caption{ Number of ON cells at $n$th generation of of odd-rule 
one-dimensional CA defined
by a 5-celled neighborhood (\seqnum{A247649}).}
\label{TabGRLT}
$$
\begin{array}{c|rrrrrrrrr}
n& && a_n \\
\hline
0& \mathbf{1}  \\
1& \mathbf{5}  \\
2-3& 5 & \mathbf{7}  \\
4-7& 5 & \mathbf{17} & 7 & \mathbf{19}  \\
8-15& 5 & 25 & 17 & \mathbf{19} & 7 & \mathbf{31} & 19 & \mathbf{25}  \\
16-31& 5 & 25 & 25 & 35 & 17 & \mathbf{61} & 19 & \mathbf{71} & \ldots 
\end{array}
$$
\end{table}
The bold-faced entries in Table~\ref{TabGRLT} form \seqnum{A253085}, and we end with
one last question:
is there an independent characterization of this sequence?
An affirmative answer might make the
generalized run length transform a lot more interesting.
Much remains to be done in this subject! 


\section*{Postscript, March 2015}
After seeing an initial version of this paper,
Doron Zeilberger observed that it is
possible to use Theorems \ref{Th1} and \ref{Th2} to automate 
 calculation of sequences giving
the number of ON cells in odd-rule CAs, and in
the case of height-one neighborhoods,
to find and rigorously prove the correctness of
generating functions
for the sequences of which they are the run
length transforms. Details will appear elsewhere \cite{ESZ, 3by3}.


\section*{Acknowledgments}
Theorems \ref{Th1} and \ref{Th2} were
suggested by reading Torsten Sillke's paper \cite{Sillke}.
Thanks to Hrothgar for sending a copy of \cite{Hroth}.
Figures \ref{FigFred1}--\ref{Fig780}
were produced with the help of the CellularAutomaton command 
in {\em Mathenatica} \cite{MMA}. 
Kellen Myers showed me how to make an
animated gif with {\em Mathematica}.
Thanks  to  Roman Pearce and Michael Monagan
for computing the initial terms of sequence \eqn{EqMM}.
Stephen Wolfram, Todd Rowland, and Hrothgar provided
helpful comments on the manuscript.


\bigskip
\hrule
\bigskip

\noindent 2010 {\it Mathematics Subject Classification}:
Primary 11B85, 37B15.

\noindent \emph{Keywords: } Automata sequences, cellular automata, 
 Moore neighborhood,  von Neumann neighborhood,
odd-rule cellular automata,  run length transform, 
Fredkin Replicator, Rule 110, Rule 150

\end{document}